\crefname{equation}{}{}
\Crefname{equation}{Equation}{Equations}
\Crefname{lem}{Lemma}{Lemma}
\long\def\delete#1{}
\newcommand{\trivial}[2][]{\if\relax\detokenize{#1}\relax
{\color{red} \vspace{0em} $[$  #2 $]$}
\else
\ifx#1h
\ifcsname showtrivial\endcsname
{\color{orange} \vspace{0em}  $[$ #2 $]$}
\fi
\else {\red Wrong argument!} \fi
\fi\ignorespaces
}
\newcommand{\byhide}[2][]{\if\relax\detokenize{#1}\relax
{\color{orange} \vspace{0em} Plan to delete:  #2}
\else
\ifx#1h\relax\fi
\fi
}
\newcommand\clrr{\color{red}}
\newcommand\lm{\lambda}
\newcommand{\bfone}{\mathbf{1}}
\def\GL{\mathrm{GL}}
\def\SO{\mathrm{SO}}
\def\Sp{\mathrm{Sp}}
\def\rO{\mathrm{O}}
\def\Ind{\mathrm{Ind}}
\def\Nil{\mathrm{Nil}}
\def\tr{\mathrm{tr}}
\def\bC{\mathbb{C}}
\def\bN{\mathbb{N}}
\def\cC{\mathcal{C}}
\def\cO{\mathcal{O}}
\def\ckcO{{\check{\cO}}}
\def\sY{\mathscr{Y}}
\def\bfdd{\mathbf{d}}
\def\sfS{\mathsf{S}}
\def\sfW{\mathsf{W}}
\def\rcc{\mathrm{c}}
\def\fhh{\mathfrak{h}}
\def\fgg{\mathfrak{g}}
\def\foo{\mathfrak{o}}
\def\fsp{\mathfrak{sp}}
\def\fso{\mathfrak{so}}
\newcommand{\BC}{{\mathbb {C}}}
\newcommand{\BN}{{\mathbb {N}}}
\newcommand{\BZ}{{\mathbb {Z}}}
\newcommand{\CO}{{\mathcal {O}}}
\renewcommand{\vsp}{{\vspace{0.2in}}}
\newcommand{\sgn}{\operatorname{sgn}}
\newcommand{\oO}{\operatorname{O}}
\newcommand{\oS}{\operatorname{S}}
\newcommand{\oZ}{\mathcal{Z}}
\newcommand{\oU}{\mathcal{U}}
\newcommand{\g}{\mathfrak g}
\newcommand{\h}{\mathfrak h}
\newcommand{\p}{\mathfrak p}
\renewcommand{\l}{\mathfrak l}
\renewcommand{\t}{\mathfrak t}
\newcommand{\s}{\mathfrak s}
\renewcommand{\o}{\mathfrak o}
\newcommand{\gl}{\mathfrak g \mathfrak l}
\newcommand{\Z}{\mathbb{Z}}
\def\C{\mathbb{C}}
\newcommand{\R}{\mathbb R}
\newcommand{\la}{\langle}
\newcommand{\ra}{\rangle}
\newcommand{\be}{\begin {equation}}
\newcommand{\ee}{\end {equation}}
\newtheorem*{thm*}{Theorem}
\newtheorem{thm}{Theorem}[section]
\newtheorem{lem}[thm]{Lemma}
\newtheorem*{lem*}{Lemma}
\newtheorem{prop}[thm]{Proposition}
\newtheorem{cor}[thm]{Corollary}
\newtheorem*{claim*}{Claim}
\newtheorem{defn}[thm]{Definition}
\newtheorem{dfnl}[thm]{Definition}
\theoremstyle{remark}
\newtheorem*{remark}{Remark}
\newtheorem*{eg*}{Example}
\def\half{{\tfrac{1}{2}}}
\def\orb{\mathrm{Orbit}}
\def\ckfgg{\check{\fgg}}
\def\chico{\chi_{\ckcO}}
\def\tdBV{\tilde{\mathrm{d}}_{\mathrm{BV}}}
\def\tdSP{\tilde{\mathrm{d}}_{\mathrm{SP}}}
\def\msim{\stackrel{m}{\sim}}
\def\osim{\stackrel{o}{\sim}}
\def\Irr{\mathrm{Irr}}
\def\bNilsp{\overline{\Nil}^{\mathrm{sp}}}
\def\Irrms{\Irr^{\mathrm{ms}}}
\def\Irrsp{\Irr^{\mathrm{sp}}}
\def\tdSP{\tilde{\mathrm d}_{\mathrm{SP}}}
\def\Springer{\fO}
\def\Springer{{\mathrm{Springer}}}
\def\iotacell{\iota_{\mathrm{cell}}}
\def\MA{$\mathsf{(MA)}$}
\def\MB{$\mathsf{(MB)}$}
\newtheorem{introtheorem}{\bf{Theorem}}
\begin{document}

\title[]{On the notion of metaplectic Barbasch-Vogan duality}

\author [D. Barbasch] {Dan Barbasch}
\address{Department of Mathematics\\
Cornell University\\
Ithaca, NY14853, USA}
\email{barbasch@math.cornell.edu}

\author [J.-J. Ma] {Jia-Jun Ma}
\address{School of Mathematical Sciences\\
  Xiamen University
  Xiamen, 361005, China}
  \address{Department of Mathematics, Xiamen University Malaysia campus, Sepang, Selangor Darul Ehsan, 43900,  Malaysia}
 \email{hoxide@xmu.edu.cn}

\author [B. Sun] {Binyong Sun}
\address{Institute for Advanced Study in Mathematics, Zhejiang University\\
  Hangzhou, 310058, China}\email{sunbinyong@zju.edu.cn}

\author [C.-B. Zhu] {Chen-Bo Zhu}
\address{Department of Mathematics\\
  National University of Singapore\\
  10 Lower Kent Ridge Road, Singapore 119076} \email{matzhucb@nus.edu.sg}

\subjclass[2020]{16D60, 22E46} \keywords{Classical group, metaplectic group, nilpotent orbit, Barbasch-Vogan duality, primitive ideal, special unipotent
  representation, theta lift, Weyl group representation}

\begin{abstract}
In analogy with the Barbasch-Vogan duality for real reductive linear
groups, we introduce a duality notion useful for the representation theory of the real metaplectic groups. This is a map on the set of nilpotent orbits in a complex symplectic Lie algebra, whose range consists of the so-called metaplectic special nilpotent
orbits. We relate this duality notion with the theory of primitive
ideals and extend the notion of special unipotent representations to
the real metaplectic groups. We also interpret the duality map in
terms of double cells of Weyl group representations.
  \end{abstract}

\maketitle

\tableofcontents

\section{Introduction and the main results}\label{sec:intro}

An important problem in representation theory is to extend the local Langlands program on the representation theory of linear algebraic groups defined over a local field to nonlinear covers. A key example is of course the metaplectic group and its most famous representations are the oscillator (or Weil) representations \cite{Sha, Weil}. This has been a subject of intense research in recent years, see for instance the foundation paper of Weissman on L-groups and parameters for covering groups \cite{Weis}.  We refer the reader to \cite{GGW} for a discussion on the historical developments in the study of nonlinear covering groups.

The subject of this paper concerns one small but useful aspect of the (extended) Langlands duality in the case of the metaplectic group and it concerns nilpotent orbits. The local field will be archimedean and the duality will be an analogue of the Barbasch-Vogan duality introduced in \cite{BVUni}. A key impetus as well as the main purpose of this paper are to make explicit and emphasize this duality notion, which we view to be basic for the representation theory of the real metaplectic group.

\medskip

We recall the notions relevant to adjoint nilpotent orbits for complex
classical Lie groups (the general linear groups, the orthogonal and the symplectic groups). A standard
    reference is \cite{CM}. We will generally identify an element  of
  the Lie algebra $\g$ of a complex classical Lie group $G$ with the corresponding matrix
  in the standard representation.  An $X\in \mathfrak g$ (or its $G$-orbit) is called
  nilpotent, if the corresponding matrix is nilpotent. The set of nilpotent orbits
  is denoted by $\overline{\mathrm{Nil}}
  (\g\l_n(\BC))$, $\overline{\mathrm{Nil}}(\o_{2n+1}(\BC))$,
  $\overline{\mathrm{Nil}}(\s\p_{2n}(\BC))$ or
  $\overline{\mathrm{Nil}}(\o_{2n}(\BC))$ ($n\in \BN:=\{0,1,2,3,\dots\}$),  and called   of type A, B,
  C, or D, respectively. The orbits are called classical
  nilpotent orbits. Recall that a classical nilpotent orbit is also identified with its
  Young diagram. Briefly, by the Jacobson-Morozov theorem, there is an $\s\l_2$-triple representing the nilpotent orbit (i.e, containing an element of the orbit). View the standard representation of $\g$ as the representation of the $\s\l_2$-triple. The dimensions of the irreducible $\s\l_2$-subrepresentations then give the rows of the Young diagram.
  The Young diagram is (again) called type A, B, C, D if the corresponding nilpotent orbit is of this type. Specifically even sized rows occur an even
  number of times for types B and D, and odd sized rows occur an even number of times for type C.

Nilpotent orbits under the orthogonal group may differ from orbits of the
special orthogonal group in case the matrix size $m=4n>0$.
Each nilpotent $\oO_{4n}(\C)$-orbit
$\mathcal O\subset \o_{4n}(\C)$ splits into one or two nilpotent
$\SO_{4n}(\C)-$orbits. The cases when the $\oO_{4n}(\C)-$orbit splits into
two $\SO_{4n}(\C)-$orbits are called very even. For
these orbits, the
corresponding Young diagram consists of even rows only, each row size
occurring an even number of times.

A nilpotent orbit $\CO$ is said to be special if it is special in the
sense of  Lusztig (\cite{L79,L82}). In the case of $\oO_{4n}(\C)$, by convention every very even nilpotent orbit is special, since both $\SO_{4n}(\C)-$orbits in it are special. The set of special orbits is
indicated by a superscript $\mathrm{sp}$.

\medskip
We will use the fact (\cite[Section 6.3]{CM}) that a nilpotent orbit of type B, C, or D is special if and only if the transpose of its Young diagram is of type B, C or C, respectively. They are a
  paraphrase of the notion of Lusztig who uses symbols. Note the the curious situation for type $\mathrm{D}$ in the above characterization (see \cite[page 100]{CM}), which is addressed by the notion about to be defined.

We will consider the complex symplectic group. We let $\g:=\s\p_{2n}(\C)$ ($n\geq 0$).
\begin{defn}\label{d:msp}
A nilpotent orbit of type $\mathrm C$ is said to be metaplectic special if the transpose of its Young diagram is of type $\mathrm D$.
\end{defn}

\begin{remark} The notion of metaplectic special appears earlier in \cite{Mo96} (where it is called anti-special) and in \cite{JLS}.
\end{remark}

As usual, the universal Cartan subalgebra $\t$ of $\g$ is identified
with $\BC^n$, and the set of positive roots is identified with
\begin{equation}\label{eq:roots}
  \Phi^+:=\{e_i\pm e_j\mid 1\leq i<j\leq n\}\sqcup \{2e_i\mid 1\leq i\leq n\}\subset \t^*\quad (\textrm{$*$ indicates the dual space}).
\end{equation}
Here $e_1, e_2,\cdots, e_n$ is the standard basis of $\BC^n$, and we
identify the dual space $(\BC^n)^*$ with $\BC^n$ so that the basis $e_i$ are self-dual. The Weyl group $\sfW_n\subset \GL_n(\BC)$ of $\g$, called the Weyl group of type $\mathrm C_{n}$, is generated by
all the permutation matrices and the diagonal matrices of order
$2$. Write $\oZ(\g)$ for the center of the universal enveloping
algebra $\oU(\g)$ of $\g$.  Through the Harish-Chandra isomorphism, we
have an identification
\begin{equation}\label{zg}
 \oZ(\g)=\left(\oS(\C^n)\right)^{\sfW_n},
\end{equation}
where $\oS$ indicates the symmetric algebra, and a superscript group indicates invariants under the group action.
A (algebraic) character of $\oZ(\g)$ is thus represented by an orbit
of $\sfW_n$ in $ \BC^n$.

\begin{defn}\label{def:mpli}
A character $\chi: \oZ(\g)\rightarrow \BC$ is said to be metaplectic integral if it is represented by an element in $(\frac{1}{2}+\BZ)^n$.
\end{defn}

The main results of this note concern the primitive ideals of $\oU(\g)$ and their associated varieties, for
$\g=\s\p_{2n}(\C)$. We refer the reader to \cite{Dix} as a general reference.

Recall that for any complex reductive Lie algebra $\g_1$, the associated variety of a two-sided ideal of $\oU(\g_1)$ is the subvariety in $\g_1^*$ of zeroes of the associated graded ideal in $\textrm{S}(\g_1)$, the symmetric algebra on $\g_1$. Here the grading is defined using the standard filtration of $\oU(\g_1)$. Recall also the basic result of Borho-Brylinski \cite{BB} and Joseph
\cite{J85} that the associated variety of a primitive ideal of
$\oU(\g_1)$ is the closure of a single nilpotent orbit in $\g_1^*$. 

We will identify $\g =\s\p_{2n}(\BC) $ with $\g^*$ via half of the trace form
\[
  \la X, Y\ra:=\half\tr(XY),\qquad X, Y\in \g.
\]
(This half of the trace form induces the ``standard" form on the universal Cartan subalgebra $\t=\mathbb C^n$.)

\begin{introtheorem}\label{thm13}
Let $\g =\s\p_{2n}(\BC)$ and $I$ be a primitive ideal of $\oU(\g)$ with a metaplectic integral infinitesimal character.  Then the associated variety of $I$ is the closure of a metaplectic special nilpotent orbit in $\g^*$.
\end{introtheorem}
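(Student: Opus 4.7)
My plan is to combine three classical inputs: the structure of the integral root subsystem at a metaplectic integral infinitesimal character, the Joseph--Barbasch--Vogan parametrization of primitive ideals by left cells in the integral Weyl group, and a Lusztig--Spaltenstein-type duality between nilpotent orbits of types $D$ and $C$.

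First I would compute the integral Weyl group. For $\lambda \in (\tfrac12+\BZ)^n$ representing $\chi$, a root $\alpha$ is integral if $\langle \lambda,\alpha^\vee\rangle \in \BZ$. For the short roots $\alpha = \pm e_i \pm e_j$, one has $\alpha^\vee = \alpha$ and $\langle \lambda,\alpha\rangle = \pm\lambda_i\pm\lambda_j \in \BZ$. For the long roots $\alpha = \pm 2e_i$, one has $\alpha^\vee = \pm e_i$ and $\langle \lambda,\pm e_i\rangle = \pm\lambda_i \in \pm(\tfrac12+\BZ)$, which is never in $\BZ$. Hence the integral root subsystem $\Psi_{[\lambda]}$ is of type $D_n$, and the integral Weyl group is $W_{[\lambda]} \cong W(D_n)$.

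After a preliminary translation-functor argument reducing to the case that $\lambda$ is regular dominant, I would invoke the Joseph--Barbasch--Vogan parametrization of primitive ideals: those of $\oU(\g)$ with infinitesimal character $\chi$ are in bijection with the left cells of $W_{[\lambda]} = W(D_n)$. By Lusztig's theory and the Springer correspondence, each left cell $C$ is attached to a special nilpotent orbit $\cO_C \subset \fso_{2n}(\BC)$, i.e.\ a special orbit of type $D$.

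The central step is to express the associated variety of the primitive ideal $I(C)$ as $\overline{d(\cO_C)} \subset \g^*$, where $d \colon \Nil(\fso_{2n}(\BC)) \to \Nil(\fsp_{2n}(\BC))$ is the Barbasch--Vogan / Lusztig--Spaltenstein-type duality map between nilpotent orbits of types $D$ and $C$. Combinatorially, $d$ amounts to transposing Young diagrams followed by a standard collapse to the type $C$ condition; one then checks directly that its image consists of precisely those type $C$ partitions whose transpose is a type $D$ partition, i.e.\ the metaplectic special nilpotent orbits in the sense of the paper.

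The main technical obstacle is the identification $\AV(I(C)) = \overline{d(\cO_C)}$: in the integral case this is classical work of Joseph and Barbasch--Vogan, but the non-integral metaplectic setting requires either a careful reduction via translation functors from a nearby integral regular infinitesimal character, or an appeal to Lusztig's general framework describing associated varieties for arbitrary integral subsystems $\Psi_{[\lambda]}$. A subsidiary, cleaner step is the partition-theoretic verification that $\mathrm{Im}(d)$ coincides with the metaplectic special orbits; this is a direct combinatorial check and is the conceptual reason why metaplectic special---rather than ordinary special---orbits appear here.
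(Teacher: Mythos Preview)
Your approach is genuinely different from the paper's, and your central observation is correct: at a metaplectic integral infinitesimal character the integral root subsystem is $D_n$, the integral Weyl group is $W(D_n)$, and this is precisely why metaplectic special (transpose of type $D$) rather than ordinary special orbits should appear. The paper, by contrast, never works directly with $W(D_n)$. Instead it passes through the stable-range theta lift for the complex dual pair $(\Sp_{2n}(\BC),\oO_{2n+2a+1}(\BC))$: given a primitive ideal $I$ with metaplectic integral infinitesimal character, it manufactures an irreducible $(\g\times\g,G)$-module $V$ with $\mathrm{Orbit}(V)=\CO$, lifts it to an irreducible $(\h\times\h,H)$-module $\eta(V)$ which now has \emph{integral} infinitesimal character (Lemma~\ref{theta2}), invokes the classical Barbasch--Vogan result to conclude $\mathrm{Orbit}(\eta(V))$ is special in $\o_{2n+2a+1}(\BC)^*$, and then reads off that $\CO=\nabla(\mathrm{Orbit}(\eta(V)))$ is metaplectic special via the combinatorics of Proposition~\ref{cdnn}. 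So the paper reduces to the known integral case for a \emph{different, larger} Lie algebra rather than analysing the non-integral situation for $\g$ itself.

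Your proposal has a real gap at the step you yourself flag as the main obstacle, namely the identification $\AV(I(C))=\overline{d(\cO_C)}$. Your first suggested fix, translation from a nearby integral regular infinitesimal character, cannot work: translation functors only connect infinitesimal characters differing by an element of the weight lattice $\BZ^n$, and $(\tfrac12+\BZ)^n$ and $\BZ^n$ lie in different cosets, so there is no integral character in the translation class of a metaplectic integral one. Your second suggestion, ``Lusztig's general framework for arbitrary integral subsystems'', is where the actual content would have to go, but it is not a black box one can simply cite. The standard statement relating associated varieties to cells uses $j$-induction from $W_{[\lambda]}$ to $W$; here $W(D_n)\subset W(C_n)$ is an index-two normal subgroup rather than a parabolic, so the usual Lusztig--Spaltenstein induction of orbits does not apply directly, and you would need to specify exactly which induction/extension procedure you mean and prove that it computes the associated variety in this non-Levi situation. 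That is doable (and the authors hint in a remark that $\tilde d_{\mathrm{BV}}$ admits such a description), but it is the whole proof, not a citation. The theta-lifting route in the paper is chosen precisely to avoid this difficulty.
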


\begin{remark} Theorem \ref{thm13} should be viewed as an analogue of the result of Barbasch and Vogan on representations of complex semisimple groups with integral infinitesimal characters (\cite[Definition 1.10]{BVUni} and remarks immediately after).  For analogous results in the case of $p-$adic metaplectic groups, we refer the reader to the work of M{\oe}glin \cite[Theorem 1.4]{Mo96}, and Jiang, Liu and Savin \cite[Theorem 11.1]{JLS}.
\end{remark}

We introduce some notations and terminologies relevant to Young diagrams. The number of boxes in a Young diagram is called the size of the Young diagram.
For a Young diagram $\bfdd$, write $\bfdd^{\mathrm t}$ for the transpose of $\bfdd$. Also write $\bfdd^+$ for the Young diagram
obtained by adding one box to the first row. When $\bfdd$ is non-empty, write $\bfdd^-$ for the Young diagram obtained by removing  one box from the last row. Also recall the collapse operation of Gerstenhaber \cite[Lemma 6.3.3]{CM}: for a Young diagram $\bfdd$ of size $2n+1$, there is a unique largest Young diagram of type B of the same size dominated by $\bfdd$ (in the usual dominance order). This is called the $\mathrm B-$collapse of $\bfdd$. The $\mathrm C-$collapse and $\mathrm D-$collapse of a Young diagram $\bfdd$ of size $2n$ are defined similarly.

Following \cite{Weis} and in analogy with the Langlands dual, define $\check
\g:=\g=\s\p_{2n}(\BC)$ and call it the metaplectic dual of $\g$.
Now define the metaplectic Lusztig-Spaltenstein duality map:
\begin{equation}\label{tdls00}
\begin{array}{rcl}
   \tilde{\mathrm d}_{\mathrm{LS}}:\overline{\mathrm{Nil}}(\check \g)&\rightarrow & \overline{\mathrm{Nil}}^{\mathrm{sp}}(\o_{2n}(\BC)),\\
     \bfdd&\mapsto &  \textrm{the $\mathrm D-$collapse of  $\bfdd^{\mathrm t}$}.
     \end{array}
   \end{equation}

Define another map
\be\label{tdsp00}
\begin{array}{rcl}
  \tilde{ \mathrm d}_{\mathrm{SP}}:  \overline{\mathrm{Nil}}^{\mathrm{sp}}(\o_{2n}(\BC))&\rightarrow &\overline{\mathrm{Nil}}^{\mathrm{ms}}(\g^*),\\
  \bfdd&\mapsto & \textrm{the $\mathrm C-$collapse of $(\bfdd^+)^-$},
     \end{array}
\ee
where $\overline{\mathrm{Nil}}^{\mathrm{ms}}(\g^*)$ denotes the set of metaplectic special nilpotent orbits in $\overline{\mathrm{Nil}}(\g^*)$.

\begin{prop}\label{lemin1}
\begin{itemize}
\item [(a)]
The map $\tilde{\mathrm d}_{\mathrm{LS}}$ in \eqref{tdls00} is  well-defined, surjective, and order reversing.
\item [(b)]
The map $\tilde{ \mathrm d}_{\mathrm{SP}}$ in \eqref{tdsp00} is well-defined,  bijective, and order preserving.
\end{itemize}
\end{prop}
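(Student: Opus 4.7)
The plan is to treat (a) first as a direct adaptation of the classical Lusztig--Spaltenstein duality, and then to establish (b) by a careful combinatorial analysis of the box-shift-plus-collapse operation.

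For (a), well-definedness follows from the classical fact that the D-collapse of any partition of $2n$ is automatically a special D-partition (see \cite[\S 6.3]{CM}). Surjectivity is immediate: given $\mu \in \Nilsp(\o_{2n}(\BC))$, the transpose $\mu^{\mathrm t}$ is a C-partition by the definition of ``special D'', hence $\mu^{\mathrm t} \in \Nil(\check\g)$, and since $\mu$ is already of type D, its D-collapse is $\mu$ itself, giving $\tilde{\mathrm d}_{\mathrm{LS}}(\mu^{\mathrm t}) = \mu$. The order-reversing property follows from two standard facts: transposition reverses the dominance order on partitions of a fixed size, and the $X$-collapse is monotone for each $X \in \{\mathrm B, \mathrm C, \mathrm D\}$.

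For (b), the main task is well-definedness: to show that the C-collapse of $(\bfdd^+)^-$ is metaplectic special whenever $\bfdd \in \Nilsp(\o_{2n}(\BC))$. I plan to characterize metaplectic special C-partitions via the parity pattern of their column lengths (equivalently, via the D-condition on the transpose), and then track directly how the box-shift $\bfdd \mapsto (\bfdd^+)^-$ followed by the C-collapse interacts with this pattern. For bijectivity, the natural candidate inverse is $\mu \mapsto $ D-collapse of $\mu$, which lies in $\Nilsp(\o_{2n}(\BC))$ by the same classical fact from (a). One verifies both compositions are the identity: the easy inequality $\bfdd \leq (\bfdd^+)^-$ in the dominance order (all partial sums are preserved or increase by $1$) lifts to $\bfdd \leq \tilde{\mathrm d}_{\mathrm{SP}}(\bfdd)$ by exhibiting a C-partition sandwiched between $\bfdd$ and $(\bfdd^+)^-$, after which a short argument identifies $\bfdd$ as the largest D-partition dominated by $\tilde{\mathrm d}_{\mathrm{SP}}(\bfdd)$. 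Order-preservation is then immediate from the monotonicity of the $\pm$ box-shifts and the collapse operations under dominance.

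The hardest step will be the two-way verification in (b): the C-collapse can introduce nontrivial modifications when $(\bfdd^+)^-$ is not already of type C, and the main obstacle is to show that these modifications are precisely undone by the D-collapse applied in the reverse direction. This reduces to a finite case analysis on the terminal-row parities, the subtlest case being when the $^-$ operation deletes a row (because the last part of $\bfdd^+$ equals $1$) and the C-collapse then has to rearrange the upper part of the diagram. As a sanity check, one can independently verify that $\Nilsp(\o_{2n}(\BC))$ and $\mathrm{Nil}^{\mathrm{ms}}(\g^*)$ have the same cardinality, so that injectivity of $\tilde{\mathrm d}_{\mathrm{SP}}$ alone already implies bijectivity.
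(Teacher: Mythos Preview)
Your plan is correct but takes a genuinely different route from the paper.

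The paper does not attack the combinatorics of $\tilde{\mathrm d}_{\mathrm{LS}}$ and $\tilde{\mathrm d}_{\mathrm{SP}}$ directly. Instead it fixes an auxiliary integer $a\geq n$ and embeds both maps into the classical picture for the larger Lie algebra $\mathfrak{sp}_{2n+2a}(\mathbb C)$ via the row- and column-deletion operators $\check\nabla$ and $\nabla$. Concretely, it sets up two commutative squares (Lemmas~\ref{spspo000} and~\ref{spspo00}) in which the left vertical arrows are the classical $\mathrm d_{\mathrm{LS}}$ and $\mathrm d_{\mathrm{SP}}$ for $\mathfrak{sp}_{2n+2a}$, the right vertical arrows are the metaplectic maps $\tilde{\mathrm d}_{\mathrm{LS}}$ and $\tilde{\mathrm d}_{\mathrm{SP}}$, and the horizontal arrows $\check\nabla,\nabla$ are bijections onto the appropriate sets. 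The desired properties then transfer from the classical maps to the metaplectic ones, once one checks (Lemmas~\ref{aaaa} and~\ref{spspo}) that the classical maps respect the first-column-length constraints singled out by the horizontal bijections. No explicit inverse of $\tilde{\mathrm d}_{\mathrm{SP}}$ is ever written down.

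Your approach is more elementary and self-contained: you never leave rank $n$, and your proposed inverse $\mu\mapsto\text{D-collapse of }\mu$ is explicit and testable. The trade-off is that the paper's indirect route is not wasted effort: the very same commutative squares are reassembled into Proposition~\ref{cdnn}, which links $\tilde{\mathrm d}_{\mathrm{BV}}$ to $\mathrm d_{\mathrm{BV}}$ for $\mathfrak{o}_{2n+2a+1}$ and is the combinatorial backbone of the proofs of Theorems~\ref{thm13} and~\ref{thm16} via theta lifting. So if you pursue your direct argument, you will still need to establish something equivalent to Proposition~\ref{cdnn} separately before you can prove the main theorems.
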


\begin{defn} \label{def:MBV} The metaplectic Barbasch-Vogan duality is defined to be the composition of \eqref{tdls00} and \eqref{tdsp00}:
\begin{equation}\label{MBV}
  \tilde{\mathrm d}_{\mathrm{BV}}:=  \tilde{ \mathrm d}_{\mathrm{SP}}\circ
  \tilde{\mathrm d}_{\mathrm{LS} }:
  \overline{\mathrm{Nil}}(\check \g)\rightarrow \overline{\mathrm{Nil}}^{\mathrm{ms}}(\g^*).
\end{equation}
\end{defn}

\begin{eg*}
\begin{itemize}
\item The metaplectic Barbasch-Vogan dual of $[2n]$ (principal nilpotent orbit) is $[2,1^{2n-2}]$ (minimal nilpotent orbit).
\item The metaplectic Barbasch-Vogan dual of $[1^{2n}]$ (zero orbit) is $[2n]$ (principal nilpotent orbit).
\end{itemize}
\end{eg*}

\begin{remark} The duality map of \Cref{def:MBV} appears in \cite[Section 1.4.2]{MoUnip}. One may also find its detailed description in \cite[Section 7]{MR}.
\end{remark}

Let $\check \CO\in  \overline{\mathrm{Nil}}(\check \g)$. Following Arthur and Barbasch-Vogan \cite[Section 5]{BVUni}, consider a semisimple element of $\check \g$ that equals half of the neutral element in any $\s\l_2$-triple representing $\check \CO$. By identifying the universal Cartan subalgebra $\check \t$ of $\check \g =\s\p_{2n}(\C)$ with $\t^{*}$ via half of the trace form on $\g=\s\p_{2n}(\C)$, it determines a character $\chi _{\check \CO}: \oZ(\g)\rightarrow \C$, which is explicitly described as follows.  For every  integer $a\geq 0$, write
\begin{equation}
  \label{eq:defrho}
  \rho(a):=\left\{ \begin{array}{ll}
                     (1, 2, \cdots, \frac{a-1}{2}), \quad &\textrm{if $a$ is odd;}\\
                     (\frac{1}{2}, \frac{3}{2}, \cdots, \frac{a-1}{2}), \quad &\textrm{if $a$ is even;}\\
                    \end{array}
                 \right.
\end{equation}
By convention, $\rho(1)$ and $\rho(0)$ are  the empty sequence.
Write $a_1\geq  a_2\geq \cdots\geq a_s>0$ for the rows of the Young diagram of $\check \CO$. Then the character $\chi_{\check \CO}$ is represented by the $\sfW_n-$orbit of the element
\begin{equation}\label{chico}
 (\rho( a_1), \rho(a_2),  \cdots, \rho(a_s), 0, 0, \cdots, 0 )\in \BC^n,
\end{equation}
Here the number of $0$'s is half of the number of odd rows in the Young diagram of $\check \CO$.

Recall the following basic result of Duflo \cite{Dix} \cite[Section 3]{Bor}, which is valid for any complex reductive Lie algebra $\g_1$: for every character $\chi$ of $\oZ(\g_1)$, there exists a unique maximal ideal of $\oU(\g_1)$ that contains the kernel of $\chi$, to be called
the maximal ideal of $\oU(\g_1)$ with infinitesimal character $\chi$.
Note that all maximal ideals of $\oU(\g_1)$ are primitive ideals.

Recall that $\check \g =\s\p_{2n}(\BC)$, the metaplectic dual of $\g =\s\p_{2n}(\BC)$.

\begin{introtheorem}\label{thm16}
 Let $\check \CO\in  \overline{\mathrm{Nil}}(\check \g)$ and denote by $I_{\check \CO}$ the maximal ideal of $\oU(\g)$ with infinitesimal character $\chi_{\check \CO}$. Then the associated variety of $I_{\check \CO}$ equals the closure of $\tilde{\mathrm d}_{\mathrm{BV}}(\check \CO)$.
\end{introtheorem}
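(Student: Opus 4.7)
The plan is to adapt Barbasch and Vogan's strategy from the integral case in \cite{BVUni} to this metaplectic half-integral setting. First, by Duflo's theorem, one realizes $I_{\check\CO}$ as the annihilator $\Ann L(\lambda)$ of a specific irreducible highest weight module $L(\lambda)$ of $\g$, with $\lambda+\rho$ a representative of $\chi_{\check\CO}$ lying in the anti-dominant chamber of the integral Weyl subgroup $W_{[\chi_{\check\CO}]} \subseteq W_n$. The maximality of $I_{\check\CO}$ among primitive ideals with infinitesimal character $\chi_{\check\CO}$ pins down a specific left cell of the associated Weyl group element $w$, namely the cell whose corresponding annihilator is largest.

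Next, I would invoke the results of Joseph and Borho--Brylinski, which identify the associated variety of $\Ann L(w\cdot\lambda)$ with the closure of the nilpotent orbit attached via the Springer correspondence to the unique special representation of $W_n$ in the left cell of $w$. The task thus reduces to a Weyl-group-theoretic computation: locate the cell of the distinguished $w$, extract the special $W_n$-representation $\sigma_w$ it contains, and read off its Springer orbit $\CO_{\sigma_w}$. I would carry this out in terms of symbols; the operations on nilpotent orbits that define $\tilde{\mathrm d}_{\mathrm{BV}}=\tilde{\mathrm d}_{\mathrm{SP}}\circ\tilde{\mathrm d}_{\mathrm{LS}}$ translate, via the Springer correspondence, into explicit shuffles of entries in type-$C$ and type-$D$ symbols.

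The combinatorial heart of the proof is to verify, block-by-block over the rows $a_1 \ge \cdots \ge a_s$ of $\check\CO$, that the pattern $\rho(a_i)$ appearing in $\chi_{\check\CO}$ produces, on the symbol side, exactly the output of these shuffles. Concretely, the transpose-then-$D$-collapse defining $\tilde{\mathrm d}_{\mathrm{LS}}(\check\CO)$ is one symbol manipulation, and the $(\cdot)^+$-then-$(\cdot)^-$-then-$C$-collapse defining $\tilde{\mathrm d}_{\mathrm{SP}}$ is another; their composite must agree with the symbol of $\sigma_w$. Proposition \ref{lemin1}, which guarantees that each of these maps is well-defined at the level of nilpotent orbits, is what makes this symbol-level composition unambiguous.

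The main obstacle is the mixed integer/half-integer structure of $\chi_{\check\CO}$. When all parts $a_i$ are odd the character is integral, $W_{[\chi_{\check\CO}]}=W_n$, and one is essentially in a case covered by classical Barbasch-Vogan. When some $a_i$ are even, $W_{[\chi_{\check\CO}]}$ is a proper subgroup of $W_n$ of product type (a type-$C$ factor together with type-$A$ factors); the passage from a left cell in this smaller group to a left cell of $W_n$---Barbasch-Vogan's induction of cells---must be tracked carefully at the symbol level so as to match the operations defining $\tilde{\mathrm d}_{\mathrm{BV}}$, and the trailing zeros in $\chi_{\check\CO}$ arising from pairs of odd $a_i$ must be threaded through in parallel. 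This symbol-level bookkeeping, together with the verification that the distinguished cell indeed realizes the maximal annihilator claimed in the first paragraph, is where the bulk of the technical work resides.
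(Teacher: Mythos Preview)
Your proposal takes a genuinely different route from the paper. The paper does not work directly with cells and symbols inside $\g=\fsp_{2n}(\BC)$. Instead it passes through theta lifting for the complex dual pair $(\Sp_{2n}(\BC),\oO_{2n+2a+1}(\BC))$ in the stable range ($a\ge n$): the spherical irreducible module $V_{\check\CO}=\oU(\g)/I_{\check\CO}$ is shown to be unitarizable (Lemma~\ref{sunitary}), so its theta lift $\Theta(V_{\check\CO})$ is a spherical irreducible $(\h\times\h,H)$-module for $\h=\o_{2n+2a+1}(\BC)$ with \emph{integral} infinitesimal character $\chi_{\check\CO'}$, where $\check\CO'\in\Nil(\fsp_{2n+2a}(\BC))$ has first row $2a$ and $\check\nabla(\check\CO')=\check\CO$. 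The classical Barbasch--Vogan result (Theorem~\ref{DesBV}) then gives $\mathrm{Orbit}(\Theta(V_{\check\CO}))=\mathrm d_{\mathrm{BV}}(\check\CO')$, and the combinatorial commutative diagram of Proposition~\ref{cdnn} together with the orbit correspondence under theta lifting (Lemma~\ref{theta3}) transports this back to $\tilde{\mathrm d}_{\mathrm{BV}}(\check\CO)$. Thus the paper offloads the entire cell/symbol computation onto the already-proved integral case for a larger orthogonal algebra, at the price of establishing unitarity of $V_{\check\CO}$ and the behaviour of orbits under $\Theta$; your approach would instead redo the Barbasch--Vogan combinatorics from scratch in the half-integral setting, which is more self-contained but considerably heavier.

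One point in your outline needs correcting before it can be carried through: for an infinitesimal character with entries in $\BZ\cup(\tfrac12+\BZ)$, the integral root system in type $C_n$ splits as $C_p\times D_q$, not $C$ times type-$A$ factors. On the half-integer block the long roots $2e_i$ are non-integral while every $e_i\pm e_j$ remains integral, giving a type-$D$ factor. This is not cosmetic: the type-$D$ integral Weyl group on the half-integer block is exactly what makes the $D$-collapse appear in $\tilde{\mathrm d}_{\mathrm{LS}}$, and the passage from $W(D_q)$-cells up to $W(C_n)$ is precisely what your symbol-matching must reproduce.
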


\begin{remark} In the case of the Langlands dual, the corresponding
  result is a theorem of Barbasch-Vogan giving a  representation-theoretical interpretation of the
  Lusztig-Spaltenstein duality.
  This will be reviewed in \Cref{sec:review}.
  Our first proof of Theorem \ref{thm16} (\Cref{sec:proof1}) is reduced to this
  case via the technique of theta lifting (for complex dual pairs in
  the stable range).
\end{remark}

\begin{remark} The afore-mentioned result of Barbasch-Vogan is based on their work \cite{BVPri1,BVPri2} on the classification of primitive ideals in terms of Weyl group representations and Springer correspondence.
Our second
  proof (\Cref{sec:proof2}) is based on the
  interpretation of the Kazhdan-Lusztig conjectures for non-integral
  infinitesimal character given in \cite{VoIC4,ABV}, and the metaplectic
  dual map $\tilde{\mathrm d}_{\mathrm{BV}}$ is modeled by that in \cite{BVUni}.
\end{remark}

Write $\widetilde{\Sp}_{2n}(\R)$ for the metaplectic double cover of the real symplectic group $\Sp_{2n}(\R)$.
Recall that a  smooth Fr\'echet representation of  moderate growth  of a real reductive group is called a Casselman-Wallach representation \cite{Ca89,Wa2} if its Harish-Chandra module has  finite length. Following Barbasch and Vogan \cite{ABV,BVUni}, we make the following definition.

\begin{defn}
Let $\check \CO\in \overline{\mathrm{Nil}}(\check \g)$. We say that a genuine irreducible Casselman-Wallach representation $V$ of $\widetilde{\Sp}_{2n}(\R)$  is attached to $\check \CO$ if
\begin{itemize}
\item the infinitesimal character of $V$ equals $\chi_{\check \CO}$, and
\item the associated variety of the annihilator ideal of $V$ equals  the closure of $\tilde{\mathrm d}_{\mathrm{BV}}(\check \CO)$.
\end{itemize}
\end{defn}

We will call a genuine irreducible Casselman-Wallach representation $V$ of $\widetilde{\Sp}_{2n}(\R)$ (metaplectic) special unipotent if it is attached to $\check \CO$, for some $\check \CO\in \overline{\mathrm{Nil}}(\check \g)$. As examples, the irreducible pieces of the oscillator representations are (metaplectic) special, and are attached to the principal nilpotent orbit of $\check \g=\s\p_{2n}(\BC)$.

\vsp
The proof of our main results is not hard, and we will give two proofs, using representation theory of complex groups in both.
The first proof uses theta correspondence for complex dual pairs while the second one uses Kazhdan-Lusztig duality for complex groups. In the rest of this introduction, we will give some remarks on representation theory of the real metaplectic group as well as the relationship of this article to  the authors' series of two papers \cite{BMSZ1,BMSZ2}, in which we construct and classify special unipotent representations of real classical groups (including the real metaplectic group). As a consequence of the construction and classification, we show that all of them are unitarizable, as predicted by the Arthur-Barbasch-Vogan conjecture \cite[Introduction]{ABV}. Apart from their clear and well-known interest for the theory of automorphic forms \cite{ArUni,ArEnd}, special unipotent representations belong to a fundamental class of unitary representations to be constructed from nilpotent coadjoint orbits in the Kirillov philosophy (the orbit method; see \cite{Ki62,Ko70,VoBook}). In the case of complex groups, the
  notion of special unipotent representations was introduced in \cite{BVUni} where
  their characters and properties predicted by Arthur were
  established. For the complex classical groups they were shown to be
  unitary, and a larger class is introduced in \cite[Sections 4 and
    5]{B89} and shown that they form the building blocks for the unitary representations.
The representations termed (metaplectic) special in this paper are
  part of these building blocks.  They appear in Proposition
\ref{prop:sphu} in this paper. See also \cite{VoBook,Vo89}.
Special unipotent representations form the
building blocks of the spherical unitary dual of split real and p-adic
groups as detailed in \cite{B.Sph} and \cite{BC}. We remark that Losev, Mason-Brown and Matvieievskyi \cite{LMBM} have recently proposed a notion of unipotent representations for a complex reductive group, extending the notion of special unipotent representations (of Arthur and Barbasch-Vogan).

From their historical origin, representations of $\widetilde{\Sp}_{2n}(\R)$ have often been investigated in the framework of the oscillator representations and local theta correspondence \cite{Howe79,Howe89}. See for example \cite{AB2,GS}. Indeed our construction of special unipotent representations for all real classical groups in \cite{BMSZ2} (the second paper in the series) is carried out in this framework, and it requires us to have a suitable notion of nilpotent orbit duality for the real metaplectic group, which the current article provides.

In addition Renard and Trapa \cite{RT1} have established a Kazhdan-Lusztig algorithm for characters of irreducible genuine representations of $\widetilde{\Sp}_{2n}(\R)$ (with metaplectic integral infinitesimal character),
following the seminal work of Vogan \cite{VoIC3,VoIC4} on irreducible characters of real reductive linear groups (with integral infinitesimal character). We remark that the character theory of Vogan and Renard-Trapa form part of the ingredients towards the main goal of \cite{BMSZ1} (the first paper in the series), which is to count special unipotent representations attached to any $\check \CO$ in $\overline{\mathrm{Nil}}(\check \g)$.

\vsp

This article is organized as follows. In \Cref{sec:review}, we review the Barbasch-Vogan duality for classical Lie algebras. In \Cref{sec:MBV-BV}, we relate
the metaplectic Barbasch-Vogan duality with the Barbasch-Vogan duality for a (much) larger
symplectic Lie algebra. Along the way we prove basic properties of the
metaplectic Barbasch-Vogan duality, namely Proposition \ref{lemin1}. To work with primitive ideals of $\oU(\g)$ with infinitesimal character $\chi_{\check \CO}$, our main idea is to ``lift'' all that we do in $\g =\s\p_{2n}(\C)$ to $\h = \o_{2n+2a+1}(\BC)$, through theta lifting for the complex dual pair $(G,H)=(\Sp_{2n}(\BC), \oO_{2n+2a+1}(\BC))$ in the so-called stable range (where we fix an integer $a\geq n$). This is done in \Cref{sec:theta}, and in particular it allows us to link primitive
ideals of $\oU(\g)$ with a metaplectic integral infinitesimal character with primitive ideals of $\oU(\h)$ with an integral infinitesimal character. \Cref{sec:proof1} will be devoted to the proof of Theorems \ref{thm13} and \ref{thm16}. In \Cref{sec:proof2}, we give our second proof of Theorems \ref{thm13} and \ref{thm16} from the perspective of
Kazhdan-Lusztig theory \cite{KL}, and
interpret the metaplectic Barbasch-Vogan duality in terms of double cells of Weyl group representations.
\vsp

\noindent {\bf Acknowledgements}: The authors thank Xuhua He, Hiroyuki Ochiai and David Renard for their interest and comments on an earlier version of the article. A special thanks goes to Zhiwei Yun who brought to our attention a general framework \cite{LY}
relevant to the contents of this article. The authors would also like to thank the referees for their valuable comments and suggestions.

\vsp
D. Barbasch is supported by NSF grant, Award Number 2000254. J.-J. Ma is supported by the National Natural Science Foundation of China (Grant No. 11701364 and Grant No. 11971305) and  Xiamen University
Malaysia Research Fund (Grant No. XMUMRF/2022-C9/IMAT/0019).
B. Sun is supported by National Key R \& D Program of China (No. 2022YFA1005300 and 2020YFA0712600) and New Cornerstone Investigator Program.
 C.-B. Zhu is supported by MOE AcRF Tier 1 grant R-146-000-314-114, and
Provost’s Chair grant E-146-000-052-001 in NUS.

C.-B. Zhu is grateful to Max Planck Institute for Mathematics in Bonn, for its warm hospitality and conducive work environment, where he spent the academic year 2022/2023 as a visiting scientist.

\section{Review of the Barbasch-Vogan duality}\label{sec:review}

In this section, we review the Barbasch-Vogan duality for complex classical Lie algebras.

We will work with a pair $(\epsilon, V)$, where $\epsilon=\pm 1$, and $V$ is an $\epsilon$-symmetric complex bilinear space, i.e., a finite dimensional complex vector space equipped with a non-degenerate $\epsilon$-symmetric bilinear form. Write $G_V$ for the isometry group of $V$, $\g_V$  for the Lie algebra of $G_V$, $\overline{\mathrm{Nil}}(\g_{V})$ for the set of nilpotent $G_V$-orbits in $\g_{V}$.

The following terminology helps us to achieve some notational economy.

\begin{dfnl}  Two pairs $(\epsilon, V)$ and $(\check \epsilon, \check
  V)$ are said to be Langlands duals of each other if one of the followings holds:
\[
  \begin{aligned}
&\epsilon = \check \epsilon=1, &&\dim V=\dim\check V&&\text{ even, } && G_V=\oO(V),&& G_{\check V}=\oO(\check V);\\
&\epsilon=-1, \check \epsilon=1,&&\dim V=\dim\check V-1&&\text{ even, }&& G_V=\Sp(V),&& G_{\check V}=\oO(\check V);\\
&\epsilon=1, \check \epsilon=-1,&&\dim V=\dim\check V+1&&\text{ odd, }&& G_V=\oO(V),&& G_{\check V}=\Sp(\check V).
  \end{aligned}
\]
\end{dfnl}

\begin{remark} For comparison, we note that $(\epsilon, V)$ and $(\check \epsilon, \check V)$ are metaplectic duals of each other if
\[
  \textrm{$\epsilon=\check \epsilon=-1, \ $  and $\ \dim V=\dim \check V$}.
\]
In this case the group and its dual are the metaplectic group
$\widetilde{\Sp}(V).$
\end{remark}

\medskip
We give a combinatorial description of the duality in terms of Young
diagrams.
Fix a pair $(\check \epsilon, \check V)$. The Lusztig-Spaltenstein
duality map \cite{Spa} is
\be\label{dls}
\begin{array}{rcl}
   \mathrm d_{\mathrm{LS}}:  \overline{\mathrm{Nil}}(\g_{\check V})&\rightarrow &\overline{\mathrm{Nil}}(\g_{\check V}),\\
     \bfdd&\mapsto&  \textrm{the $(\mathrm B, \mathrm C $ or $\mathrm D)-$collapse of  $\bfdd^{\mathrm t}$}.
     \end{array}
\ee
It is order reversing and the image of this map is $\overline{\mathrm{Nil}}^{\mathrm{sp}}(\g_{\check V})$.

When $(\check \epsilon, \check V)$ is the Langlands
dual of $(\epsilon, V)$, define an order preserving bijection:
\be\label{dsp}
\begin{array}{rcl}
  \mathrm d_{\mathrm{SP}}:  \overline{\mathrm{Nil}}^{\mathrm{sp}}(\g_{\check V})&\rightarrow &\overline{\mathrm{Nil}}^{\mathrm{sp}}(\g_V^*),\smallskip\\
     \bfdd&\mapsto&
     \left\{
                \begin{array}{ll}
                  \textrm{the $\mathrm C-$collapse of $\bfdd^-$,} \quad &\textrm{if $\g_{\check V}$ has type B;}\\
                      \textrm{the $\mathrm B-$collapse of $\bfdd^+$,} \quad &\textrm{if $\g_{\check V}$ has type C;}\\
                      \bfdd,  \quad &\textrm{if $\g_{\check V}$ has type D.}\\
                    \end{array}
                    \right.
     \end{array}
\ee
The Barbasch-Vogan duality maps nilpotent orbits of $\mathfrak
g_{\check V}$ to special nilpotent orbits in $\mathfrak g_{V}^*$.
It is the composition of \eqref{dls} and \eqref{dsp}:
\begin{equation}\label{DefBV}
   \mathrm d_{\mathrm{BV}}:=    \mathrm d_{\mathrm{SP}}\circ \mathrm d_{\mathrm{LS} }: \overline{\mathrm{Nil}}(\g_{\check V})\rightarrow \overline{\mathrm{Nil}}^{\mathrm{sp}}(\g_{V}^*).
\end{equation}
\trivial[h]{
  Remark: $d_{SP}$ from type B to C is the inverse of $d_{SP}$ from type C to
  type B. See ``Sommers, Lusztig's Canonical Quotient and Generalized Duality,
  Section~10''.}

The group $G_V$ acts on $\oU(\g_V)$ through the Adjoint
representation. By the Harish-Chandra isomorphism (or a slight variant of the Harish-Chandra isomorphism), we have an identification
\[
 \oU(\g_V)^{G_V}=\left(\oS(\C^n)\right)^{\sfW_n}
\]
 as in \eqref{zg}, where $n=\lfloor\frac{\dim V}{2}\rfloor$.

\trivial[h]{The invariant space $\oU(\g_V)^{G_V}$ equals the
center $\oZ(\g_V)$ of $\oU(\g_V)$ unless $G_V$ is an even orthogonal
group {\clrr when it is ??}. In all cases, we have an identification
\[
 \oU(\g_V)^{G_V}=\left(\oS(\C^n)\right)^{\sfW_n}
\]
 as in \eqref{zg}, where $n=\lfloor\frac{\dim V}{2}\rfloor$.
 }

Let $\check \CO\in \overline{\mathrm{Nil}}(\g_{\check V})$. As in \eqref{chico}, we attach the following algebraic character $\chi _{\check \CO}$ of $\oU(\g_V)^{G_V}$:
\begin{equation}\label{usual-chico}
 \chi _{\check \CO}:= (\rho( a_1), \rho(a_2),  \cdots, \rho(a_s), 0, 0, \cdots, 0 ),
\end{equation}
where $a_1\geq  a_2\geq \cdots\geq a_s>0$ are the rows of the Young
diagram of $\check \CO$. This is the usual Arthur infinitesimal
character, determined by $\half {^{L}h}$ in the
  notation of
  \cite[Section 5]{BVUni}.

The following result \cite[Corollary A3]{BVUni} gives a
representation-theoretical interpretation of the Barbasch-Vogan duality map, in terms of maximal ideals. More
discussions of Lusztig-Spaltenstein and Barbasch-Vogan dualities can
be found in \cite[Section 3.5]{Ach}.

\begin{thm}\label{DesBV} {\bf {\upshape (Barbasch-Vogan)}}
Let $\check \CO\in  \overline{\mathrm{Nil}}(\g_{\check V})$ and denote
by $I_{\check \CO}$ the maximal $G_V$-stable ideal of $\oU(\g_V)$ that
contains the kernel of $\chi_{\check \CO}$. Then the associated
variety of $I_{\check \CO}$ equals the closure of ${\mathrm
  d}_{\mathrm{BV}}(\check \CO)$.
\end{thm}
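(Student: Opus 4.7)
The plan is to prove this classical result of Barbasch and Vogan by combining Joseph's classification of primitive ideals with the Springer correspondence, as in the original strategy of \cite{BVPri1, BVPri2, BVUni}. Three inputs are needed: (i) for a regular integral infinitesimal character $\chi$, the primitive ideals of $\oU(\g_V)$ with infinitesimal character $\chi$ form a finite set parametrized by Kazhdan--Lusztig right cells in the Weyl group of $\g_V$, and their inclusion relations refine to a closure order on two-sided cells; (ii) by Joseph, the associated variety of such a primitive ideal $I$ is the closure of a single nilpotent orbit $\cO_I \subset \g_V^*$; and (iii) by Borho--Brylinski and Joseph, $\cO_I$ is precisely the Springer correspondent of the special Weyl group representation attached to the two-sided cell of $I$.

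With (i)--(iii) in hand, the theorem reduces to identifying the two-sided cell $C(\check\cO)$ governing $I_{\check\cO}$, and then matching its Springer correspondent with $\dBV(\check\cO)$. For the first task, I would invoke Joseph's Goldie rank theory: the maximal $G_V$-stable ideal with infinitesimal character $\chi_{\check\cO}$ is distinguished among primitive ideals at $\chi_{\check\cO}$ by lying in the cell whose attached special representation has the extremal value of Lusztig's $a$-function compatible with the prescribed infinitesimal character. For the Arthur-type characters $\chi_{\check\cO}$ of the explicit form \eqref{usual-chico}, a type-by-type analysis using Lusztig symbols (as carried out in \cite{BVPri1, BVPri2}) shows that the special representation of $C(\check\cO)$ is $\sigma_{\mathrm d_{\mathrm{LS}}(\check\cO)}$, suitably twisted to accommodate the transition between the Langlands dual Lie algebras $\g_{\check V}$ and $\g_V$. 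Applying the Springer correspondence for $\g_V$ and invoking the symbol manipulations implicit in \eqref{dsp} then produces $\mathrm d_{\mathrm{SP}}(\mathrm d_{\mathrm{LS}}(\check\cO)) = \dBV(\check\cO)$.

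The hardest step will be the identification of the governing cell: explicitly matching the Arthur infinitesimal character $\chi_{\check\cO}$ to a Kazhdan--Lusztig two-sided cell in a type-dependent way. This is where the substantial combinatorial work of Barbasch--Vogan on Goldie rank polynomials and Weyl group cells for classical types is indispensable, and it cannot be bypassed by more elementary means. One key sanity check to keep me honest along the way is that the range of $\dBV$ consists of special orbits (Type~D self-dual on the orthogonal side, C-collapsed on the symplectic side), which matches the known fact that Springer correspondents of special representations are exactly the special nilpotent orbits; once the governing cell has been identified, the remaining bookkeeping with partitions and symbols is routine.
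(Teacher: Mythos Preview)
The paper does not prove this theorem at all: it is stated as a known result of Barbasch and Vogan and simply cited from \cite[Corollary~A3]{BVUni}. There is therefore no ``paper's own proof'' to compare your proposal against.

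Your sketch is a reasonable high-level summary of the original Barbasch--Vogan strategy, and you are right that the substantive content lies in \cite{BVPri1, BVPri2, BVUni}. But as written it is not a proof so much as a pointer to the literature: the crucial identification of the two-sided cell governing $I_{\check\cO}$ from the explicit form of $\chi_{\check\cO}$ is deferred entirely to ``type-by-type analysis using Lusztig symbols'' and ``substantial combinatorial work \dots\ [that] cannot be bypassed.'' That is an honest assessment, but it means your proposal does not go beyond what the paper already does, namely cite the result. If you intend to actually reprove the theorem here, you would need to carry out (or at least summarize in detail) the symbol combinatorics linking the Arthur parameter $\chi_{\check\cO}$ to the special representation $\sigma_{\mathrm d_{\mathrm{LS}}(\check\cO)}$; if you intend only to invoke it, then a one-line citation suffices, which is exactly what the paper does.
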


\delete{
\begin{remark} Except for the case when $G_V$ is an even orthogonal
  group, all ideals of $\oU(\g_V)$ are $G_V-$stable. When $G_V$ is an
  even orthogonal group, a maximal $G_V-$stable ideal of $\oU(\g_V)$
  is either a maximal ideal of $\oU(\g_V)$ or the intersection of two
  distinct maximal ideals of $\oU(\g_V)$.
\end{remark}
}

\section{Metaplectic Barbasch-Vogan duality and Barbasch-Vogan duality}
\label{sec:MBV-BV}

Recall the Lusztig-Spaltenstein duality for the symplectic Lie algebra:
\[
\begin{array}{rcl}
   \mathrm d_{\mathrm{LS}}:  \overline{\mathrm{Nil}}(\s\p_{2n}(\BC))&\rightarrow &\overline{\mathrm{Nil}}^{\mathrm{sp}}(\s\p_{2n}(\BC)),\\
     \bfdd&\mapsto&  \textrm{the $\mathrm C-$collapse of  $\bfdd^{\mathrm t}$}.
     \end{array}
     \]

We shall first relate the metaplectic Lusztig-Spaltenstein duality map $\tilde{\mathrm d}_{\mathrm{LS}}$ in \eqref{MBV} with the Lusztig-Spaltenstein duality map ${\mathrm d}_{\mathrm{LS}}$ for a larger symplectic Lie algebra $\s\p_{2n+2a}(\BC)$.

We start with some notations. For a Young diagram $\bfdd$, write $\nabla(\bfdd)$ for the Young diagram obtained from $\bfdd$ by removing the first (or the largest) column, and write $\check \nabla(\bfdd)$ for the Young diagram obtained from $\bfdd$ by removing the first (or the largest) row.   Write $\mathrm r_1(\bfdd)$ and $\mathrm c_1(\bfdd)$ for the lengths of the first row and the first column of $\bfdd$, respectively. Similar notations apply for a classical nilpotent orbit.

\begin{lem}\label{spspo000}
Suppose that $a\geq n$ is an integer. Then the diagram
\be\label{cdls}
 \begin{CD}
            \{\check \CO\in \overline{\mathrm{Nil}}(\s\p_{2n+2a}(\BC))\mid \mathrm{r}_1(\check \CO)=2a\} @>  \check \nabla  >> \overline{\mathrm{Nil}}(\s\p_{2n}(\BC)) \\
            @V \bfdd\mapsto \textrm{the $\mathrm C-$collapse of  $\bfdd^{\mathrm t}$} VV           @V\bfdd\mapsto \textrm{the $\mathrm D-$collapse of  $\bfdd^{\mathrm t}$}V V\\
               \{\CO\in \overline{\mathrm{Nil}}(\s\p_{2n+2a}(\BC))\mid \mathrm{c}_1( \CO)=2a\}
             @> \nabla   >>  \overline{\mathrm{Nil}}(\o_{2n}(\BC))\\
  \end{CD}
\ee
commutes.
\end{lem}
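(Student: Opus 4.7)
The plan is to unwind the diagram combinatorially. Fix $\check\cO \in \mathrm{Nil}(\fsp_{2n+2a}(\bC))$ with $\mathrm{r}_1(\check\cO) = 2a$, write $\lambda$ for its partition, and set $\mu = \lambda^{\mathrm t}$. Three elementary observations drive the argument: (a) $\mu$ has exactly $2a$ parts since $\mathrm{c}_1(\mu) = \mathrm{r}_1(\lambda) = 2a$; (b) the symplectic condition on $\lambda$ (odd parts have even multiplicity) translates into the parity relation $\mu_{2k-1} \equiv \mu_{2k} \pmod{2}$ for every $k$; and (c) transposition swaps removing the first row with removing the first column, so $\nabla(\mu) = \check\nabla(\lambda)^{\mathrm t}$. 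The lemma is thus reduced to the identity
\[
\nabla(\mu^C) = (\nabla(\mu))^D,
\]
where the superscripts $C$ and $D$ denote the C-collapse and the D-collapse, respectively.

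I would begin by exhibiting the pair-average partition $\tilde\mu$ defined by $\tilde\mu_{2k-1} = \tilde\mu_{2k} = (\mu_{2k-1} + \mu_{2k})/2$, an integer by (b). A direct check shows that $\tilde\mu$ is a symplectic partition of $2n+2a$ with $2a$ parts satisfying $\tilde\mu \leq \mu$ in dominance (partial sums agree at even positions and are smaller at odd positions). Since $\mu^C$ is the unique largest symplectic partition $\leq \mu$, we have $\tilde\mu \leq \mu^C \leq \mu$, and comparing first entries of the transposes pins down $\mathrm{c}_1(\mu^C) = 2a$. In particular, the left column of the diagram lands in $\{\cO : \mathrm{c}_1(\cO) = 2a\}$, as claimed.

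Next I would verify that $\nabla(\mu^C)$ is of type D and satisfies $\nabla(\mu^C) \leq \nabla(\mu)$. The type-D property follows by translating the symplectic-parity condition $(\mu^C)^{\mathrm t}_{2k-1} \equiv (\mu^C)^{\mathrm t}_{2k}$ through the identity $\nabla(\mu^C)^{\mathrm t} = \check\nabla((\mu^C)^{\mathrm t})$, which shifts the paired indices to $(2k, 2k+1)$---precisely the type-D condition. The dominance inequality is automatic because $\nabla$ (subtracting $1$ from each of $2a$ parts) preserves dominance on partitions with $2a$ parts.

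The final and hardest step is maximality: any type-D partition $\nu \leq \nabla(\mu)$ must satisfy $\nu \leq \nabla(\mu^C)$. The natural strategy is to pad $\nu$ with zeros to length $2a$ and add $1$ to every entry, producing a candidate $\tilde\nu$. A multiplicity calculation makes $\tilde\nu$ symplectic precisely when the number $\ell$ of nonzero parts of $\nu$ is even---which holds by a straightforward parity count for type-D partitions of $2n$---and the dominance $\nu \leq \nabla(\mu)$ translates into $\tilde\nu \leq \mu$ after padding to $2a$-part representatives. Then $\tilde\nu \leq \mu^C$, and applying $\nabla$ gives $\nu \leq \nabla(\mu^C)$. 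The hypothesis $a \geq n$ enters at exactly one essential point: it guarantees $\ell \leq 2n \leq 2a$, so that $\nu$ can be padded up to length $2a$ in the first place. I expect this lifting step to be the main obstacle, as it is where the combinatorial constraints---parity of $\ell$, the bound $\ell \leq 2a$, and the preservation of dominance under padding-and-shifting---must all cohere simultaneously.
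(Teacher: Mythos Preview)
Your proposal is correct and follows essentially the same route as the paper: factor the square as ``transpose first, then collapse.'' Your observation~(c) is exactly the paper's first sub-square (transposition intertwines $\check\nabla$ and $\nabla$), and your identity $\nabla(\mu^C)=(\nabla(\mu))^D$ is exactly the paper's second sub-square (C-collapse on the large side matches D-collapse after stripping the first column). The difference is only in level of detail: the paper declares both sub-squares ``easy to check'' and moves on, whereas you actually carry out the verification of the collapse step---the pair-average construction pinning down $\mathrm c_1(\mu^C)=2a$, the index-shift argument for the type-D condition, and the lift-and-compare maximality argument with its parity count for~$\ell$. Your write-up is thus a fleshed-out version of what the paper leaves implicit; nothing in it is extraneous, and the point where you flag $a\ge n$ as essential (allowing $\nu$ to be padded to length~$2a$) is exactly right.
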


\begin{proof} It is easy to check that the diagrams
\[
 \begin{CD}
            \{\check \CO\in \overline{\mathrm{Nil}}(\s\p_{2n+2a}(\BC))\mid \mathrm{r}_1(\check \CO)=2a\} @>  \check \nabla  >> \overline{\mathrm{Nil}}(\s\p_{2n}(\BC)) \\
            @V \bfdd\mapsto \textrm{$\bfdd^{\mathrm t}$} VV           @V\bfdd\mapsto \textrm{$\bfdd^{\mathrm t}$}V V\\
               \{\CO\in \overline{\mathrm{Nil}}(\g\l_{2n+2a}(\BC))\mid \mathrm{c}_1(\CO)=2a\}
             @> \nabla   >>  \overline{\mathrm{Nil}}(\g\l_{2n}(\BC))\\
  \end{CD}
\]
and
\[
 \begin{CD}
            \{ \CO\in \overline{\mathrm{Nil}}(\g\l_{2n+2a}(\BC))\mid \mathrm{c}_1( \CO)=2a\} @>   \nabla  >> \overline{\mathrm{Nil}}(\g\l_{2n}(\BC)) \\
            @V \textrm{$\mathrm C-$collapse} VV           @V \textrm{$\mathrm D-$collapse}V V\\
               \{\CO\in \overline{\mathrm{Nil}}(\s\p_{2n+2a}(\BC))\mid \mathrm{c}_1( \CO)=2a\}
             @> \nabla   >>  \overline{\mathrm{Nil}}(\o_{2n}(\BC))\\
  \end{CD}
\]
commute. Thus the diagram
\eqref{cdls} also commutes.
\end{proof}

We will use $\preccurlyeq$ to indicate the dominance order of the Young diagrams.

\begin{lem}\label{aaaa}
Suppose that $a\geq n$ is an integer. Let $\bfdd $ be a Young diagram of size $2n+2a$, and write $\bfdd_{\mathrm C}$ for its $\mathrm C-$collapse. If $\mathrm c_1(\bfdd_{\mathrm C})=2a$, then $\mathrm c_1(\bfdd)=2a$.

\end{lem}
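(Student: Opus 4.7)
The plan is to prove the stronger statement that $\mathrm c_1(\bfdd)=\mathrm c_1(\bfdd_C)$ for every Young diagram $\bfdd$ of even size; the lemma then follows at once by specializing to $|\bfdd|=2n+2a$. In particular, the hypothesis $a\ge n$ will play no role.

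One direction, $\mathrm c_1(\bfdd)\le\mathrm c_1(\bfdd_C)$, is immediate: since $\bfdd_C\preccurlyeq\bfdd$ in the dominance order, transposition reverses this to $\bfdd^{\mathrm t}\preccurlyeq(\bfdd_C)^{\mathrm t}$, and reading off the first entries gives $\mathrm c_1(\bfdd)=\bfdd^{\mathrm t}_1\le(\bfdd_C)^{\mathrm t}_1=\mathrm c_1(\bfdd_C)$.

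For the reverse inequality, I would invoke the standard iterative description of the C-collapse (\cite[Section~6.3]{CM}). At each stage, if the current partition $\bfmu$ is not yet of type C, one lets $p$ be the largest odd part of $\bfmu$ with odd multiplicity, $r$ be the largest index with $\bfmu_r=p$, and $s>r$ be the smallest index with $\bfmu_s<p-1$ (interpreted as $s=\mathrm c_1(\bfmu)+1$, i.e., a brand new row, in the degenerate case that every nonzero entry past the $p$-run equals $p-1$). The basic move replaces $\bfmu_r$ by $p-1$ and $\bfmu_s$ by $\bfmu_s+1$, and iterating terminates at $\bfdd_C$. Because total size is preserved, the first-column length can grow only at a step where $s=\mathrm c_1(\bfmu)+1$, that is, where a new row is created.

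The heart of the argument, and the only step that is not routine, is to exclude the row-creating case. If $s=\mathrm c_1(\bfmu)+1$, then every nonzero entry of $\bfmu$ strictly below the $p$-run is equal to $p-1$, so $\bfmu=(p^m,(p-1)^l)$ where $m$ is the (odd) multiplicity of $p$ and $l\ge0$. Since $p-1$ is even,
\[
|\bfmu|\;=\;mp+l(p-1)\;=\;(m+l)(p-1)+m\;\equiv\;m\;\equiv\;1\pmod 2.
\]
But the collapse preserves size, so $|\bfmu|=|\bfdd|=2n+2a$ is even, a contradiction. Hence $s\le\mathrm c_1(\bfmu)$ at every step, the first-column length is preserved throughout the iteration, and $\mathrm c_1(\bfdd_C)=\mathrm c_1(\bfdd)$ follows, completing the proof.
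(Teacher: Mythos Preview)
Your approach is genuinely different from the paper's and in fact proves more: you establish $\mathrm c_1(\bfdd)=\mathrm c_1(\bfdd_C)$ for \emph{every} Young diagram of even size, via the iterative collapse algorithm and a parity obstruction to creating a new row, so the hypothesis $a\ge n$ is never invoked. The paper argues only the special case at hand. It gets $\mathrm c_1(\bfdd)\le 2a$ from dominance (as you do), and for the reverse inequality it assumes $\mathrm c_1(\bfdd)\le 2a-1$ and writes down an explicit type-C partition $\bfdd_1$ (with columns $(2a-1,2n+1)$ when $a>n$, and small variants when $a=n$) satisfying $\bfdd_1\preccurlyeq\bfdd$; then $\bfdd_1\preccurlyeq\bfdd_C$ forces $\mathrm c_1(\bfdd_C)\le\mathrm c_1(\bfdd_1)=2a-1$, a contradiction. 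The paper's construction genuinely needs $a\ge n$ for $\bfdd_1$ to have the required column lengths. Your argument is cleaner and yields a reusable general fact; the paper's is more ad hoc but entirely sufficient for what is needed.

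One small gap to patch: in the row-creating case you write $\bfmu=(p^m,(p-1)^l)$, but $p$ is only the largest \emph{odd} part with odd multiplicity, so $\bfmu$ may well have parts strictly larger than $p$ (even parts, or odd parts occurring with even multiplicity). The correct shape is $\bfmu=(\text{larger parts},\,p^m,(p-1)^l)$. This does not damage your parity computation: every odd part among the ``larger parts'' has even multiplicity by maximality of $p$, and even parts are even, so their total contribution to $|\bfmu|$ is even. Hence $|\bfmu|\equiv m\equiv 1\pmod 2$ still holds and the contradiction goes through. Just make this step explicit.
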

\begin{proof}
If $\mathrm c_1(\bfdd)>2a$, then it is clear that $\mathrm c_1(\bfdd_{\mathrm C})>2a$, which is a contradiction. Thus  $\mathrm c_1(\bfdd)\leq 2a$. Suppose that $\mathrm c_1(\bfdd)\leq 2a-1$. Let $\bfdd_1$ be the Young diagram  such that its columns are
\[
  \left\{
    \begin{array}{ll}
      (2a-1, 2n+1), & \hbox{if $a>n$;} \\
      (2a-1,2a-1,2), & \hbox{if $a=n\geq 2$;} \\
      (2a-1,2a-1,2a-1,2a-1), & \hbox{if $a=n=1$.}
    \end{array}
  \right.
\]
Then $\bfdd_1$ has type C, and  $\bfdd_1\preccurlyeq \bfdd$. Hence $\bfdd_1\preccurlyeq \bfdd_{\mathrm C}$. This implies that $2a-1=\mathrm c_1(\bfdd_1)\geq \mathrm c_1(\bfdd_{\mathrm C})=2a$, which is also a contradiction. This proves the lemma.
\end{proof}

Part (a) of Proposition \ref{lemin1} may be reformulated as the following lemma.
\begin{lem}\label{lemin11}
The right vertical arrow of \eqref{cdls} is order reversing, and its image equals $\overline{\mathrm{Nil}}^{\mathrm{sp}}(\o_{2n}(\BC))$.
\end{lem}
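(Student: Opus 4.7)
My strategy is to exploit the commutative diagram \eqref{cdls} of Lemma~\ref{spspo000}, which expresses the right vertical arrow as $\nabla \circ (\text{left vertical}) \circ \check\nabla^{-1}$, where the left vertical is the classical Lusztig--Spaltenstein duality $\mathrm d_{\mathrm{LS}}$ for the larger algebra $\s\p_{2n+2a}(\BC)$; its basic properties (order-reversal and image equal to $\mathrm{Nil}^{\mathrm{sp}}(\s\p_{2n+2a}(\BC))$) are standard and will drive the whole argument.

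First I would show that $\check\nabla$ and $\nabla$ are order-preserving bijections between the indicated sets. For $\check\nabla$ the inverse prepends a row of length $2a$; since $a \geq n$, every partition in $\mathrm{Nil}(\s\p_{2n}(\BC))$ has parts at most $2n \leq 2a$, so this is well-defined and yields a legitimate C-type partition of size $2n+2a$ with first row exactly $2a$. For $\nabla$ a parity check shows that removing the first column (of length $2a$) from a C-type diagram with $\mathrm c_1(\CO)=2a$ yields a D-type diagram of size $2n$; conversely, prepending a column of length $2a$ to any D-type diagram of size $2n$ gives back a C-type diagram with $\mathrm c_1=2a$. The key combinatorial observation is that any D-type partition of $2n$ has even length (its number of odd parts is even by a sum-parity argument, while its number of even parts is even by the D-type condition), so $2a$ is at least the length and the new first column has length exactly $2a$. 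Dominance order is preserved by both $\check\nabla$ and $\nabla$ since partial sums are shifted by a common constant.

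The order-reversing assertion then follows immediately by composition. For the image assertion I need two facts: (a) the restricted left vertical arrow surjects onto $\mathrm{Nil}^{\mathrm{sp}}(\s\p_{2n+2a}(\BC)) \cap \{\CO : \mathrm c_1(\CO)=2a\}$, and (b) $\nabla$ sends this subset bijectively onto $\mathrm{Nil}^{\mathrm{sp}}(\o_{2n}(\BC))$. For (a), given such a special $\CO$, I would simply set $\check\CO := \CO^{\mathrm t}$: by the special C-orbit characterization $\CO^{\mathrm t}$ is itself C-type, its first row equals $\mathrm c_1(\CO)=2a$, and $\mathrm d_{\mathrm{LS}}(\check\CO)$ is the C-collapse of $\CO$, which is $\CO$ itself. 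For (b) I would use the direct row-column identity $\nabla(\CO)^{\mathrm t} = \check\nabla(\CO^{\mathrm t})$; since adding or removing a single row of even length $2a$ does not affect the multiplicity of any odd row, $\CO^{\mathrm t}$ is C-type if and only if $\nabla(\CO)^{\mathrm t}$ is C-type, which is precisely the equivalence of the C-special and D-special conditions. The main step requiring care is this parity bookkeeping, which I expect to dispatch cleanly with the even-length observation on D-type partitions recorded above.
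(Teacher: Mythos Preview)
Your proposal is correct and follows the same overall architecture as the paper: both proofs use the commutative diagram \eqref{cdls} to transport the question to the left vertical arrow (the classical Lusztig--Spaltenstein map for $\s\p_{2n+2a}(\BC)$), and both identify $\nabla$ as an order-preserving bijection carrying $\{\CO\in\mathrm{Nil}^{\mathrm{sp}}(\s\p_{2n+2a}(\BC)):\mathrm c_1(\CO)=2a\}$ onto $\mathrm{Nil}^{\mathrm{sp}}(\o_{2n}(\BC))$.

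The one genuine difference is in how surjectivity of the restricted left vertical arrow onto $\mathrm N_1$ is obtained. The paper appeals to the general surjectivity of $\mathrm d_{\mathrm{LS}}$ onto special orbits and then invokes Lemma~\ref{aaaa} to force any preimage to satisfy $\mathrm r_1(\check\CO)=2a$. Your argument is more direct and entirely avoids Lemma~\ref{aaaa}: for $\CO\in\mathrm N_1$ you simply take $\check\CO=\CO^{\mathrm t}$, which is C-type because $\CO$ is C-special, has $\mathrm r_1(\check\CO)=\mathrm c_1(\CO)=2a$, and satisfies $\mathrm d_{\mathrm{LS}}(\check\CO)=(\CO)_C=\CO$. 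This is a cleaner route. You also supply explicit verifications (bijectivity of the horizontal arrows, the even-length property of D-type partitions, the parity argument for the special condition under $\nabla$) that the paper simply asserts; the paper declares the order-reversing claim ``obvious'' (it follows directly from order-reversal of transpose and order-preservation of collapse), whereas you obtain it by composition through the diagram, which is a slight detour but perfectly valid.
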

\begin{proof}
We only prove the second assertion as the first one is obvious. Note
that the two horizontal arrows of \eqref{cdls} are
  bijections, and the bottom horizontal arrow yields a bijection
\[
   \mathrm N_1:= \{\CO\in \overline{\mathrm{Nil}}^{\mathrm{sp}}(\s\p_{2n+2a}(\BC))\mid \mathrm{c}_1( \CO)=2a\}
             \xrightarrow{ \nabla}  \overline{\mathrm{Nil}}^{\mathrm{sp}}(\o_{2n}(\BC)).
\]
Thus it remains to show that the image of the left vertical arrow of  \eqref{cdls} equals $\mathrm N_1$. This follows from Lemma \ref{aaaa} and the surjectivity of the Lusztig-Spaltenstein duality map for symplectic Lie algebras. 
\end{proof}

Recall we have the order preserving bijective map
\[
\begin{array}{rcl}
  \mathrm d_{\mathrm{SP}}:  \overline{\mathrm{Nil}}^{\mathrm{sp}}(\s\p_{2n}(\BC))&\rightarrow &\overline{\mathrm{Nil}}^{\mathrm{sp}}(\o_{2n+1}(\BC)),\\
     \bfdd & \mapsto &           \textrm{the $\mathrm B-$collapse of $\bfdd^+$}
     \end{array}
\]
The Barbasch-Vogan duality for an odd orthogonal group is given by
     \[
   \mathrm d_{\mathrm{BV}}=:    \mathrm d_{\mathrm{SP}}\circ \mathrm d_{\mathrm{LS} }: \overline{\mathrm{Nil}}(\s\p_{2n}(\BC))\rightarrow \overline{\mathrm{Nil}}^{\mathrm{sp}}(\o_{2n+1}(\BC)).
\]

We now relate the map $\tilde{ \mathrm d}_{\mathrm{SP}}$ in \eqref{tdsp00} with the map $\mathrm d_{\mathrm{SP}}$ for the larger symplectic Lie algebra $\s\p_{2n+2a}(\BC)$.
We start with an elementary lemma.

\begin{lem}\label{colb}
Let $\bfdd_1, \bfdd_2$ be two Young diagrams of size $2n+1$. Suppose that $\mathrm{c}_1(\bfdd_1)$ is odd, and  $\nabla(\bfdd_1)=(\nabla(\bfdd_2))^-$.   Then $\bfdd_1$ and $\bfdd_2$ have the same $\mathrm B-$collapses.
\end{lem}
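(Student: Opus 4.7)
The plan is to prove $\bfdd_1^{B} = \bfdd_2^{B}$ (where $\bfdd^{B}$ denotes the B-collapse) by verifying both inequalities in the dominance order $\preccurlyeq$, using the characterization of the B-collapse as the largest type B partition dominated by the given one. Throughout, write $S_k(\bfdd)$ for the sum of the first $k$ parts of $\bfdd$.

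First I decode the hypothesis. Set $c := \mathrm{c}_1(\bfdd_1)$, which is odd by assumption. Comparing sizes in $\nabla(\bfdd_1) = (\nabla(\bfdd_2))^{-}$ forces $\mathrm{c}_1(\bfdd_2) = c - 1$. If the last row of $\bfdd_1$ had length at least $2$, then $\nabla(\bfdd_1)$ would have $c$ nonzero parts, whereas $(\nabla(\bfdd_2))^{-}$ has at most $c - 1$ parts since $\bfdd_2$ has only $c - 1$ rows --- a contradiction. So $\bfdd_1$ ends in a singleton, and matching $\nabla$-values shows $\bfdd_2$ is obtained from $\bfdd_1$ by transferring that final box up to some row $l$ with $1 \leq l < c$. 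In particular $\bfdd_1 \preccurlyeq \bfdd_2$, so monotonicity of the B-collapse under dominance (if $\lambda \preccurlyeq \mu$ then $\lambda^{B} \preccurlyeq \mu^{B}$, since $\lambda^{B}$ is a type B partition dominated by $\mu$) yields the easy half $\bfdd_1^{B} \preccurlyeq \bfdd_2^{B}$.

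For the reverse inequality I prove the stronger statement $\bfdd_2^{B} \preccurlyeq \bfdd_1$, which, since $\bfdd_2^{B}$ is type B, implies $\bfdd_2^{B} \preccurlyeq \bfdd_1^{B}$. The partial sums of $\bfdd_1$ and $\bfdd_2$ agree outside the range $l \leq k \leq c - 1$, where $S_k(\bfdd_2) = S_k(\bfdd_1) + 1$; so the claim reduces to the strict inequality $S_k(\bfdd_2^{B}) < S_k(\bfdd_2)$ for each $k$ in that range. Supposing for contradiction that $S_k(\bfdd_2^{B}) = S_k(\bfdd_2)$ for some such $k$, and combining the equality with $\bfdd_2^{B} \preccurlyeq \bfdd_2$, $|\bfdd_2^{B}| = 2n + 1$, and the fact that rows $k + 1, \ldots, c - 1$ of $\bfdd_2$ are all equal to $1$, a short telescoping computation on partial sums forces $(\bfdd_2^{B})_i = 1$ for $k + 1 \leq i \leq c - 1$ and $(\bfdd_2^{B})_i = 0$ for $i \geq c$. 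Hence $\bfdd_2^{B}$ has exactly $c - 1$ rows.

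The contradiction then comes from a parity observation, which is the key point. Since $c$ is odd, $c - 1$ is even; but any type B partition of odd size $2n + 1$ must have an odd number of rows. Indeed, in a type B partition every even part appears with even multiplicity, so the total number of even parts (counted with multiplicity) is even; since the total sum $2n + 1$ is odd, the number of odd parts must be odd, and consequently the total number of parts is odd. This contradicts $\bfdd_2^{B}$ having $c - 1$ rows. Thus $S_k(\bfdd_2^{B}) < S_k(\bfdd_2)$ throughout $l \leq k \leq c - 1$, and we conclude $\bfdd_1^{B} = \bfdd_2^{B}$. The substantive step is the concluding parity argument; everything else is routine bookkeeping on partial sums.
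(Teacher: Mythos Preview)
Your proof is correct and follows essentially the same approach as the paper's: both hinge on the observation that a type B partition of size $2n+1$ must have an odd first column, and both use this to show $\bfdd_2^B \preccurlyeq \bfdd_1$ (whence equality of B-collapses). The paper compresses your partial-sum analysis into the single assertion that $\bfdd_1$ is the dominance-maximal Young diagram with odd first column among those dominated by $\bfdd_2$; your telescoping argument is exactly what is needed to verify that assertion, and your derivation of the parity fact is a correct unpacking of what the paper states without proof. One tiny wrinkle: at the boundary value $k=c-1$ your computation only yields $c_1(\bfdd_2^B)\leq c-1$ rather than equality, but since $\bfdd_2^B \preccurlyeq \bfdd_2$ forces $c_1(\bfdd_2^B)\geq c_1(\bfdd_2)=c-1$, the conclusion ``exactly $c-1$ rows'' still holds and the contradiction goes through unchanged.
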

\begin{proof}
Note that $\bfdd_1$ is the largest Young diagram such that its first column has odd length, and  $\bfdd_1\preccurlyeq \bfdd_2$. The lemma then follows by noting that the first column of every Young diagram of type B has odd length.
\end{proof}

\begin{lem}\label{spspo00}
Suppose that $a\geq n$ is an integer. Then the diagram
\be\label{cdls22}
 \begin{CD}
                          \{\CO\in \overline{\mathrm{Nil}}(\s\p_{2n+2a}(\BC))\mid \mathrm{c}_1( \CO)=2a\}
             @> \nabla   >>  \overline{\mathrm{Nil}}(\o_{2n}(\BC))\\
            @V \bfdd\mapsto \textrm{the $\mathrm B-$collapse of  $\bfdd^+$} VV           @V\bfdd\mapsto \textrm{the $\mathrm C-$collapse of  $(\bfdd^+)^-$}V V\\
 \{\CO\in \overline{\mathrm{Nil}}(\o_{2n+2a+1}(\BC))\mid \mathrm{c}_1(\CO)=2a+1\} @>  \nabla  >> \overline{\mathrm{Nil}}(\s\p_{2n}(\BC)) \\
  \end{CD}
\ee
commutes.
\end{lem}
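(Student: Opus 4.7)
The plan is to reduce the commutativity of \eqref{cdls22} to a B-versus-C analog of Lemma \ref{spspo000}, with the reduction being furnished by Lemma \ref{colb}. First I would fix $\bfdd$ in the upper-left set of \eqref{cdls22} and verify the easy identity $\nabla(\bfdd^+) = (\nabla\bfdd)^+$, so that $(\nabla\bfdd^+)^- = ((\nabla\bfdd)^+)^-$. I would then introduce an auxiliary partition $\bfdd_\star$, namely the unique partition of size $2n+2a+1$ with $\mathrm c_1(\bfdd_\star) = 2a+1$ and $\nabla\bfdd_\star = ((\nabla\bfdd)^+)^-$ (explicitly, $\bfdd_\star$ is obtained by prepending a column of length $2a+1$ to $((\nabla\bfdd)^+)^-$). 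Since $\mathrm c_1(\bfdd_\star) = 2a+1$ is odd and $\nabla\bfdd_\star = (\nabla\bfdd^+)^-$, Lemma \ref{colb} applies (with $\bfdd_1 := \bfdd_\star$ and $\bfdd_2 := \bfdd^+$) and yields that $\bfdd_\star$ and $\bfdd^+$ share the same B-collapse, which I will denote by $\bfee_\star$.

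The main step is an auxiliary claim: for any partition $\bfdd_\star$ of size $2n+2a+1$ with $\mathrm c_1(\bfdd_\star) = 2a+1$, its B-collapse $\bfee_\star$ satisfies $\mathrm c_1(\bfee_\star) = 2a+1$ and
\[
  \nabla \bfee_\star = \text{the C-collapse of } \nabla \bfdd_\star.
\]
Once established, this claim yields $\nabla \bfee_\star = \nabla(\text{B-collapse of } \bfdd^+) = $ C-collapse of $((\nabla\bfdd)^+)^-$, which is the commutativity of \eqref{cdls22}. The claim hinges on the elementary bijection
\[
  \nabla \colon \{\text{type-B partitions of size } 2n+2a+1 \text{ with } \mathrm c_1 = 2a+1\} \,\longleftrightarrow\, \{\text{type-C partitions of size } 2n\},
\]
whose inverse attaches a column of length $2a+1$. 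This is well-defined because every type-C partition of size $2n$ has at most $2n \leq 2a+1$ rows (using $a \geq n$), and a straightforward partial-sum calculation shows both directions are dominance-preserving.

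To verify the auxiliary claim, I would let $\bfdd'_\star$ be the type-B partition obtained by prepending a column of length $2a+1$ to the C-collapse of $\nabla \bfdd_\star$. By dominance preservation, $\bfdd'_\star \preccurlyeq \bfdd_\star$, and by maximality of the B-collapse, $\bfdd'_\star \preccurlyeq \bfee_\star$. Transposing gives $\mathrm c_1(\bfee_\star) \leq \mathrm c_1(\bfdd'_\star) = 2a+1$; combined with $\mathrm c_1(\bfee_\star) \geq \mathrm c_1(\bfdd_\star) = 2a+1$, this forces $\mathrm c_1(\bfee_\star) = 2a+1$. Then $\nabla \bfee_\star$ is type-C (by the bijection) and dominated by $\nabla \bfdd_\star$, so $\nabla \bfee_\star \preccurlyeq$ the C-collapse of $\nabla \bfdd_\star$; prepending a column yields $\bfee_\star \preccurlyeq \bfdd'_\star$, giving equality throughout and hence $\nabla \bfee_\star = $ the C-collapse of $\nabla \bfdd_\star$. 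The main subtlety is the first-column-length preservation step, where the hypothesis $a \geq n$ is indispensable.
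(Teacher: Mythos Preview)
Your proposal is correct and follows essentially the same route as the paper's proof: both introduce the auxiliary partition (your $\bfdd_\star$, the paper's $\bfdd'$) with first column $2a+1$ and $\nabla\bfdd_\star=(\nabla(\bfdd^+))^-$, invoke Lemma~\ref{colb} to identify the B-collapses of $\bfdd^+$ and $\bfdd_\star$, and then use the identity $\nabla(\text{B-collapse of }\bfdd_\star)=\text{C-collapse of }\nabla\bfdd_\star$ together with $\nabla(\bfdd^+)=(\nabla\bfdd)^+$. The only difference is that you supply a full dominance-based proof of this last identity (your ``auxiliary claim''), whereas the paper records it as a one-line ``Note that'' assertion.
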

\begin{proof}
Suppose that $\bfdd\in \overline{\mathrm{Nil}}(\s\p_{2n+2a}(\BC))$ and $\mathrm c_1(\bfdd)=2a$. If $a=0$ then the lemma obviously holds. Thus we assume that $a>0$ so that $\bfdd$ is not the empty Young diagram.

Let $\bfdd'$ denote the Young diagram such that  $\mathrm{c}_1(\bfdd')=2a+1$ and
\be\label{bdff1}
\nabla(\bfdd')=(\nabla(\bfdd^+))^-.
\ee
 By  Lemma \ref{colb},
\be\label{bdff2}
  \textrm{the $\mathrm B-$collapse of  $\bfdd^+$}= \textrm{the $\mathrm B-$collapse of  $\bfdd'$}.
\ee
Note that
\be\label{bdff3}
  \nabla(\textrm{the $\mathrm B-$collapse of  $\bfdd'$})=\textrm{the $\mathrm C-$collapse of  $\nabla(\bfdd')$}
\ee
and
\be\label{bdff4}
  \nabla(\bfdd^+)=(\nabla(\bfdd))^+.
\ee
The lemma then follows by combining \eqref{bdff1}, \eqref{bdff2}, \eqref{bdff3} and \eqref{bdff4}.
\end{proof}

\begin{lem}\label{spspo}
The left vertical arrow of \eqref{cdls22} yields a bijective map
 \[
                      \{\CO\in \overline{\mathrm{Nil}}^{\mathrm{sp}}(\s\p_{2n+2a}(\BC))\mid \mathrm{c}_1( \CO)=2a\}\rightarrow
 \{\CO\in \overline{\mathrm{Nil}}^{\mathrm{sp}}(\o_{2n+2a+1}(\BC))\mid \mathrm{c}_1(\CO)=2a+1\}.
\]

\end{lem}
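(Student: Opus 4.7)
The plan is to identify the left vertical arrow of \eqref{cdls22} with the restriction to $\{\mathrm{c}_1=2a\}$ of the order-preserving bijection
\[
  \mathrm d_{\mathrm{SP}}\colon \mathrm{Nil}^{\mathrm{sp}}(\s\p_{2(n+a)}(\BC))\to \mathrm{Nil}^{\mathrm{sp}}(\o_{2(n+a)+1}(\BC))
\]
recalled from \eqref{dsp}; both send $\bfdd$ to the B-collapse of $\bfdd^+$, and the case $\g_{\check V}$ of type B of \eqref{dsp}, namely $\bfee\mapsto$ the C-collapse of $\bfee^-$, provides its inverse. Since $\mathrm d_{\mathrm{SP}}$ is already known to be bijective on special orbits, the task reduces to checking that this bijection carries $\{\mathrm{c}_1=2a\}$ onto $\{\mathrm{c}_1=2a+1\}$.

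The key combinatorial input is a row-preservation property: \emph{for every partition $\mu$ of a positive even integer, the C-collapse of $\mu$ has the same first-column length as $\mu$.} One inequality is automatic because the collapse lies weakly below $\mu$ in dominance with the same total size. For the reverse inequality I will construct a type-C partition $\nu\preccurlyeq\mu$ with $\mathrm{c}_1(\nu)=\mathrm{c}_1(\mu)$ by a pairing procedure: since $|\mu|$ is even, the number of odd values occurring in $\mu$ with odd multiplicity is itself even; pair them as $(o_1>o_1'),\,(o_2>o_2'),\ldots$, and for each pair replace one copy of $o_i$ by $o_i-1$ and one copy of $o_i'$ by $o_i'+1$. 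Because $o_i>o_i'\geq 1$ forces $o_i\geq 3$, no part becomes $0$ and no new row is created, so the number of rows is preserved; each replacement is a downward box-transfer, whence $\nu\preccurlyeq\mu$; and by construction every odd value in $\nu$ now has even multiplicity, so $\nu$ is of type C. The maximality of the C-collapse in dominance then yields the sought inequality.

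With this in hand the lemma splits into two complementary directions. \emph{Inverse direction:} let $\bfee\in\mathrm{Nil}^{\mathrm{sp}}(\o_{2n+2a+1}(\BC))$ have $\mathrm{c}_1(\bfee)=2a+1$; the stable-range bound $a\geq n$ gives $\frac{2n+2a+1}{2a+1}<2$, so by pigeonhole the last part of $\bfee$ equals $1$, hence $\bfee^-$ has exactly $2a$ rows; the row-preservation property then yields $\mathrm{c}_1(\mathrm d_{\mathrm{SP}}^{-1}(\bfee))=2a$. \emph{Forward direction:} let $\bfdd\in\mathrm{Nil}^{\mathrm{sp}}(\s\p_{2n+2a}(\BC))$ have $\mathrm{c}_1(\bfdd)=2a$; since adding a box to row $1$ of $\bfdd$ creates no new row, $\mathrm{c}_1(\bfdd^+)=2a$, so the B-collapse has $\mathrm{c}_1\geq 2a+1$, the smallest odd integer not less than $2a$. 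If instead $\mathrm{c}_1(\mathrm d_{\mathrm{SP}}(\bfdd))=2a'+1$ with $a'>a$, set $n'=n+a-a'$; then $a'>a\geq n\geq n'$, so the inverse direction applied to $(n',a')$ in the same total dimension would force $\mathrm{c}_1(\bfdd)=2a'\neq 2a$, a contradiction.

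The main obstacle is the row-preservation statement for the C-collapse, where one has to verify both that the pairing procedure produces a legitimate type-C partition dominated by $\mu$ and that no new rows appear; everything else is then the pigeonhole afforded by the stable-range hypothesis $a\geq n$ together with bookkeeping against the known bijectivity of $\mathrm d_{\mathrm{SP}}$ on special orbits.
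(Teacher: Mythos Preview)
Your argument is correct, and it proceeds along a genuinely different route from the paper's.

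Both proofs start from the same reduction: since $\mathrm d_{\mathrm{SP}}$ is already an order-preserving bijection between special orbits, one only needs to verify that the condition $\mathrm c_1(\bfdd)=2a$ on the symplectic side corresponds exactly to $\mathrm c_1(\mathrm d_{\mathrm{SP}}(\bfdd))=2a+1$ on the odd-orthogonal side. The difference lies in how this correspondence is checked. The paper takes the forward implication ($\mathrm c_1(\bfdd)=2a\Rightarrow \mathrm c_1(\mathrm d_{\mathrm{SP}}(\bfdd))=2a+1$) as already established by the well-definedness of the left vertical arrow in Lemma~\ref{spspo00}, and proves only the backward implication by two ad hoc arguments: speciality of $\bfdd$ forces the first two columns to have equal length, ruling out $\mathrm c_1(\bfdd)=2a+1$; and an explicit small type-B partition $\bfdd_1\preccurlyeq\bfdd^+$ with $\mathrm c_1(\bfdd_1)=2a-1$ (given case-by-case) rules out $\mathrm c_1(\bfdd)\leq 2a-1$. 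You instead isolate a single combinatorial lemma---the C-collapse of any even-size partition preserves the number of rows---prove it via a pairing of the odd parts with odd multiplicity, and then run the argument through the inverse map $\mathrm d_{\mathrm{SP}}^{-1}(\bfee)=(\bfee^-)_C$: the pigeonhole from $a\geq n$ forces the last part of $\bfee$ to be $1$, whence row-preservation gives the backward implication directly, and the forward implication follows by re-indexing $(n,a)\mapsto(n',a')$. Your approach is more conceptual and self-contained (it does not lean on Lemma~\ref{spspo00}, and in fact never invokes speciality of $\bfdd$ in the forward direction), at the cost of proving a slightly stronger auxiliary fact; the paper's approach is shorter but relies on case-by-case explicit partitions.
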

\begin{proof}
Recall that the map
\be\label{mapsp}
\begin{array}{rcl}
  \mathrm d_{\mathrm{SP}}:  \overline{\mathrm{Nil}}^{\mathrm{sp}}(\s\p_{2n+2a}(\BC))&\rightarrow &\overline{\mathrm{Nil}}^{\mathrm{sp}}(\o_{2n+2a+1}(\BC)),\\
     \bfdd & \mapsto &           \textrm{the $\mathrm B-$collapse of $\bfdd^+$.}
     \end{array}
\ee
is well-defined and bijective. Thus it suffices to show that if $\bfdd\in \overline{\mathrm{Nil}}^{\mathrm{sp}}(\s\p_{2n+2a}(\BC))$ and $\mathrm{c}_1(\mathrm d_{\mathrm{SP}}(\bfdd))=2a+1$, then $\mathrm{c}_1(\bfdd)=2a$.

It is clear that $\mathrm c_1(\bfdd)\leq 2a+1$.
Suppose by contradiction that  $\mathrm c_1(\bfdd)= 2a+1$. Since $\bfdd$ is special, we know that the first two columns of $\bfdd$ both have length $2a+1$. This contradicts the assumption that $a\geq n$. Therefore we conclude that $\mathrm c_1(\bfdd)\leq 2a$.

 Suppose that $\mathrm c_1(\bfdd)\leq 2a-1$. Let $\bfdd_1$ be the Young diagram  such that its columns are
\[
  \left\{
    \begin{array}{ll}
      (2a-1, 2n+2), & \hbox{if $a\geq n+2$;} \\
      (2a-1,2a-1,1), & \hbox{if $a=n+1$;} \\
      (2a-1,2a-1,3), & \hbox{if $a=n\geq 2$;}\\
         (2a-1,2a-1,1,1,1), & \hbox{if $a=n=1$.}
    \end{array}
  \right.
\]
Then $\bfdd_1$ has type B, and  $\bfdd_1\preccurlyeq \bfdd^+$. Hence $\bfdd_1\preccurlyeq (\bfdd^+)_B$. This implies that $2a-1=\mathrm c_1(\bfdd_1)\geq \mathrm c_1((\bfdd^+)_B)=2a+1$, which is also a contradiction. This proves the lemma.
\end{proof}

Part (b) of Proposition \ref{lemin1}
may be reformulated as the following lemma.
\begin{lem}\label{lemin111}
The right vertical arrow of \eqref{cdls22} induces an order preserving bijection map from  $\overline{\mathrm{Nil}}^{\mathrm{sp}}(\o_{2n}(\BC))$ onto $\overline{\mathrm{Nil}}^{\mathrm{ms}}(\s\p_{2n}(\BC))$.
\end{lem}
\begin{proof}
Note that the top horizontal arrow of \eqref{cdls22} induces a bijection
 \[
                      \{\CO\in \overline{\mathrm{Nil}}^{\mathrm{sp}}(\s\p_{2n+2a}(\BC))\mid \mathrm{c}_1( \CO)=2a\}\rightarrow
 \overline{\mathrm{Nil}}^{\mathrm{sp}}(\o_{2n}(\BC)),
\]
and the bottom
horizontal arrow of \eqref{cdls22} induces a bijection
 \[
                      \{\CO\in \overline{\mathrm{Nil}}^{\mathrm{sp}}(\o_{2n+2a+1}(\BC))\mid \mathrm{c}_1( \CO)=2a+1\}\rightarrow
\overline{\mathrm{Nil}}^{\mathrm{ms}}(\s\p_{2n}(\BC)).
\]
Thus the lemma follows from Lemmas \ref{spspo00} and \ref{spspo}.
\end{proof}

Combining Lemma \ref{spspo000} and Lemma \ref{spspo00}, we arrive at the following result.

\begin{prop}\label{cdnn}
Suppose that $a\geq n$ is an integer. Then the diagram
\[
 \begin{CD}
            \{\check \CO\in \overline{\mathrm{Nil}}(\s\p_{2n+2a}(\BC))\mid \mathrm{r}_1(\check \CO)=2a\} @>  \check \nabla  >> \overline{\mathrm{Nil}}(\s\p_{2n}(\BC)) \\
            @V \mathrm d_{\mathrm{BV}} VV           @V \tilde{\mathrm d}_{\mathrm{BV}} V V\\
               \{\CO\in \overline{\mathrm{Nil}}^{\mathrm{sp}}(\o_{2n+2a+1}(\BC))\mid \mathrm{c}_1( \CO)=2a+1\}
             @> \nabla   >>  \overline{\mathrm{Nil}}^{\mathrm{ms}}(\s\p_{2n}(\BC))\\
  \end{CD}
\]
commutes.
\end{prop}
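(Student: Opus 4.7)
The plan is to derive the proposition by vertically pasting the commutative diagram \eqref{cdls} of Lemma \ref{spspo000} on top of the diagram \eqref{cdls22} of Lemma \ref{spspo00}, using that $\mathrm d_{\mathrm{BV}} = \mathrm d_{\mathrm{SP}} \circ \mathrm d_{\mathrm{LS}}$ and $\tdBV = \tdSP \circ \tdLS$ by definition. Once the two squares are stacked, the outer left column realizes $\mathrm d_{\mathrm{BV}}$, sending $\{\check \CO \mid \mathrm r_1(\check \CO) = 2a\}$ through the intermediate set $\{\CO \mid \mathrm c_1(\CO) = 2a\}$ into $\{\CO \in \mathrm{Nil}(\o_{2n+2a+1}(\BC)) \mid \mathrm c_1(\CO) = 2a+1\}$, while the outer right column realizes $\tdBV$, sending $\mathrm{Nil}(\s\p_{2n}(\BC))$ through $\mathrm{Nil}(\o_{2n}(\BC))$ back to $\mathrm{Nil}(\s\p_{2n}(\BC))$. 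This already matches the proposition's diagram, with the images landing in the correct refined subsets $\mathrm{Nil}^{\mathrm{sp}}$ and $\mathrm{Nil}^{\mathrm{ms}}$ by the general properties of $\mathrm d_{\mathrm{BV}}$ and $\tdBV$.

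The only verification required is that the shared middle node of the stack is legitimately fed by the upper square and consumed by the lower one. Concretely, given $\check \CO$ with $\mathrm r_1(\check \CO) = 2a$, Lemma \ref{spspo000} places $\mathrm d_{\mathrm{LS}}(\check \CO)$ in the set $\{\CO \mid \mathrm c_1(\CO) = 2a\}$, and the general fact recorded in \eqref{dls} places it further in $\mathrm{Nil}^{\mathrm{sp}}(\s\p_{2n+2a}(\BC))$. Hence Lemma \ref{spspo00} becomes applicable to the intermediate orbit $\CO := \mathrm d_{\mathrm{LS}}(\check \CO)$; no other compatibility is needed.

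With this in hand, the argument reduces to a one-line diagram chase: Lemma \ref{spspo000} gives $\nabla(\mathrm d_{\mathrm{LS}}(\check \CO)) = \tdLS(\check\nabla(\check \CO))$, and Lemma \ref{spspo00} applied to $\CO$ gives $\tdSP(\nabla(\CO)) = \nabla(\mathrm d_{\mathrm{SP}}(\CO))$; combining these two identities yields the required equality $\tdBV(\check\nabla(\check \CO)) = \nabla(\mathrm d_{\mathrm{BV}}(\check \CO))$. There is no genuine obstacle, since the entire proof is a formal gluing of the two preceding lemmas; the only subtlety is the bookkeeping of which subsets the intermediate orbit belongs to, and that has already been resolved in the previous paragraph.
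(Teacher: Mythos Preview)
Your proposal is correct and follows essentially the same approach as the paper, which simply states that the result is obtained by combining Lemma~\ref{spspo000} and Lemma~\ref{spspo00}. Your write-up supplies the explicit diagram chase and the bookkeeping that the intermediate orbit $\mathrm d_{\mathrm{LS}}(\check\CO)$ lies in the common middle set $\{\CO\mid \mathrm c_1(\CO)=2a\}$, which is exactly the content the paper leaves implicit.
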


\section{Theta lifting for the complex dual pair $(\Sp_{2n}(\BC), \oO_{2n+2a+1}(\BC))$}
\label{sec:theta}

Theta lifting for complex dual pairs is known explicitly by the work of Adams and Barbasch \cite{AB}. See also \cite{B17}. In this section we will examine a specific complex dual pair and establish some qualitative properties of the theta lifting
which will be used in the proof of Theorems \ref{thm13} and \ref{thm16} in \Cref{sec:proof1}.

\delete{
{\clrr The material is in much more detail in \cite{AB}, and some in \cite{B89}. At least a
  mention would be in order}.
  }

Consider the complex dual pair $(G,H):=(\Sp_{2n}(\BC), \oO_{2n+2a+1}(\BC))$ in $\Sp_{2n(2n+2a+1)}(\BC)$ \cite{Howe79}, where we fix an integer $a\geq n$.
The condition $a\geq n$ ensures that the dual pair $(G,H)$ is in the so-called stable range with $G$ the smaller member \cite{Li89}.
All our assertions concerning $n$ appearing in this note are trivial when $n=0$. We thus assume that $n\geqq 1$.

Fix a Cartan involution $\tau$ of $\Sp_{2n(2n+2a+1)}(\BC)$ that stabilizes both $G$ and $H$. Then the fixed point sets $\Sp_{2n(2n+2a+1)}(\BC)^\tau$, $G^\tau$ and $H^\tau$ are respectively maximal compact subgroups of $\Sp_{2n(2n+2a+1)}(\BC)$, $G$ and $H$.

As before, $\g=\s\p_{2n}(\BC)$ is the Lie algebra of $G$, and $\check
\g=\g$ is the metaplectic dual of $\g$. Denote by $\tau
:\g\rightarrow \g$  the differential of $\tau: G\rightarrow G$,
which is the complex conjugation of $\g$ with respect to a compact real form.
We identify the complexified Lie algebra $\g_\BC:=\g\otimes_\R \BC$ with $\g\times \g$ via the complexification map
\begin{equation}
\label{eq:comp}
  \g\rightarrow \g\times \g, \quad x\mapsto (x,  \tau(x)).
\end{equation}
We also identify the complexification of $G^\tau$ with $G$ via the inclusion homomorphism $G^\tau\rightarrow G$. Then every $(\g_\BC, G^\tau)-$module  is also a $(\g\times \g, G)-$module. Here a  $(\g\times \g, G)-$module means a $\g\times \g-$module carrying a locally algebraic representation of $G$ with the usual compatibility conditions.
Similarly, denote by $\h$ the Lie algebra of $H$. Then every $(\h_\BC,
H^\tau)-$module is also a $(\h\times \h, H)-$module.

We introduce the algebraic theta lifting. Let $\omega^\infty$ be the smooth oscillator representation of $\Sp_{2n(2n+2a+1)}(\BC)$ (\cite{Weil, Howe89}). Write $\sY$ for the set of $\Sp_{2n(2n+2a+1)}(\BC)^\tau-$finite vectors in $\omega^\infty$.
By restriction, $\sY$ is a $(\g\times \g, G)\times (\h\times \h,H)-$module. For every $(\g\times \g, G)-$module $V$, define an $(\h\times \h, H)-$module
\[
  \Theta(V):=(\sY \otimes \check V)_{\g\times \g},
\]
where $\check V$ denotes the contragredient of $V$, and a subscript Lie algebra indicates the coinvariant space.

\begin{lem}\label{theta1}
Suppose that $V$ is an irreducible $(\g\times \g, G)-$module. Then $\Theta(V)$ is a nonzero $(\h\times \h, H)-$module of finite length.
\end{lem}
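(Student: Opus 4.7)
The assertion has two independent parts, nonvanishing of $\Theta(V)$ and finite length of $\Theta(V)$, and I will treat each via the standard theta-correspondence machinery in the stable range. The only genuinely nontrivial point is the translation between J.-S.~Li's real-form formulation and the $(\g\times\g,G)$-framework adopted here via the isomorphism $\g_\bC\cong\g\times\g$.

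\emph{Nonvanishing.} Because $a\ge n$, the pair $(G,H)$ is in the stable range with $G$ the smaller member, so the matrix coefficients of $\omega^\infty$ restricted to $G$ are rapidly decreasing. The plan is to adapt Li's construction: given a nonzero matrix coefficient $\phi$ of $V$, form the integral
\[
  T_\phi(y)\;=\;\int_{G}\phi(g)\,\omega^\infty(g,1)\,y\,dg,\qquad y\in\sY,
\]
which converges absolutely by the stable-range decay and defines a nonzero $(\h\times\h,H)$-equivariant map from $\sY$ into an $(\h\times\h,H)$-module on which $\g\times\g$ acts through the central character of $\check V$. Factoring through the definition of $\Theta(V)$ as the $\g\times\g$-coinvariants of $\sY\otimes\check V$, one obtains a nonzero $(\h\times\h,H)$-quotient of $\Theta(V)$, giving $\Theta(V)\ne 0$.

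\emph{Finite length.} I would combine three inputs. (i) Finite generation: $\sY$ is finitely generated as a module over $\oU(\g\times\g)\otimes\oU(\h\times\h)$; in fact any single $K$-type of the maximal compact $\Sp_{2n(2n+2a+1)}(\BC)^\sigma$ generates it (Howe). Hence $\sY\otimes\check V$, and therefore its coinvariant quotient $\Theta(V)$, is finitely generated over $(\h\times\h,H)$. (ii) Generalized infinitesimal character: the joint action of $\oZ(\g\times\g)\otimes\oZ(\h\times\h)$ on $\sY$ is governed by a homomorphism $\oZ(\g\times\g)\to\oZ(\h\times\h)$ coming from the dual-pair Harish-Chandra homomorphism, so the infinitesimal character of $V$ forces $\Theta(V)$ to admit a generalized infinitesimal character on each factor of $\h\times\h$. (iii) Admissibility: Howe's duality theorem for the complex dual pair $(G,H)$ yields $H$-admissibility of $\Theta(V)$. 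A finitely generated, admissible $(\h\times\h,H)$-module with a generalized infinitesimal character has finite length by standard Harish-Chandra module theory.

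\emph{Anticipated obstacle.} The one place that will require care is transcribing Li's stable-range nonvanishing argument cleanly into the $(\g\times\g,G)$-language induced by $\g_\bC\cong\g\times\g$, in particular making the convergence of $T_\phi$ and its nonvanishing on $\sY\otimes\check V$ literal rather than merely formal. Once that dictionary is set up, both halves of the lemma reduce to invocations of well-known properties of $\sY$ (finite generation, Harish-Chandra-homomorphism matching, admissibility).
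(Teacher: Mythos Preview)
The paper's own proof is a two-line citation: finite length is Howe's general result, and nonvanishing is the stable-range occurrence theorem of Protsak--Przebinda. Your finite-length sketch (finite generation of $\sY$, infinitesimal-character matching, and admissibility from Howe duality) is a correct unpacking of what lies behind Howe's theorem and causes no trouble.

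For nonvanishing, however, there is a genuine gap. Li's integral
\[
T_\phi(y)=\int_G\phi(g)\,\omega^\infty(g,1)y\,dg
\]
is known to converge when $V$ is \emph{unitary}: then the matrix coefficient $\phi$ is bounded and the stable-range decay of the oscillator matrix coefficients does the rest. This is the setting of Li's paper. But the lemma is stated for an arbitrary irreducible $(\g\times\g,G)$-module $V$, which need not be unitary or even tempered, and then $\phi$ can have genuine exponential growth. The oscillator decay in the stable range (particularly at the boundary $a=n$) is not strong enough to absorb arbitrary exponential growth, so the integral may simply diverge and your construction of a nonzero quotient collapses. You flag nonvanishing as the delicate point, but you misdiagnose the obstacle as a dictionary problem between the real-form and $(\g\times\g,G)$ pictures; the real obstruction is analytic. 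This is exactly why the paper cites Protsak--Przebinda rather than Li: their argument that every irreducible admissible representation of the smaller member occurs in the stable-range Howe correspondence does not go through Li's integral and covers the non-unitary case.
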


\begin{proof} It is a general result of Howe \cite{Howe89} that $\Theta(V)$ is a $(\h\times \h, H)-$module of finite length. The fact that $\Theta(V)$ is nonzero is a consequence of the stable range condition
\cite{PrPr}.
\end{proof}

Write $\oZ(\h)$ for the center of the universal enveloping algebra $\oU(\h)$ of $\h$.  By Harish-Chandra isomorphism, we have an identification
\[
 \oZ(\h)=\left(\oS(\C^{n+a})\right)^{\sfW_{n+a}},
\]
where $\sfW_{n+a}$ is the Weyl group of type $\mathrm B_{n+a}$ or $\mathrm C_{n+a}$ (defined in \Cref{sec:intro}). The set of (algebraic) characters of $\oZ(\h)$ is thus identified with the set of $\sfW_{n+a}$-orbits in $\BC^{n+a}$.

From the correspondence of infinitesimal characters \cite{PrzInf}, we have the following

\begin{lem}\label{theta2}
Suppose that $V$ is an irreducible $(\g\times \g, G)-$module with infinitesimal character
\[
  (\lambda_1, \lambda_2, \cdots, \lambda_n; \lambda'_1, \lambda'_2, \cdots, \lambda'_n)\in \BC^n\times \BC^n.
\]
Then the $(\h\times \h, H)-$module  $\Theta(V)$ has infinitesimal character
\[
  (\lambda_1, \lambda_2, \cdots, \lambda_n, \frac{1}{2}, \frac{3}{2}, \cdots, \frac{2a-1}{2} ; \lambda'_1, \lambda'_2, \cdots, \lambda'_n, \frac{1}{2}, \frac{3}{2}, \cdots, \frac{2a-1}{2})\in \BC^{n+a}\times \BC^{n+a}.
\]
\end{lem}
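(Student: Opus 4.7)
The plan is to invoke Przebinda's correspondence of infinitesimal characters for real reductive dual pairs \cite{PrzInf}, applied to $(G,H)=(\Sp_{2n}(\BC), \oO_{2n+2a+1}(\BC))$ regarded as a real reductive dual pair inside the real form of $\Sp_{2n(2n+2a+1)}(\BC)$, and then to translate the answer back through the identifications $\g_\BC \simeq \g\times \g$ and $\h_\BC \simeq \h\times \h$ used in the text.

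First I would exploit those product decompositions: the Cartan involution $\sigma$ splits $\g_\BC$ into its $\pm \mathbf{i}$ eigenspaces, and the map $x\mapsto (x, \sigma(x))$ identifies $\g_\BC$ with $\g\times \g$, swapping the two factors by complex conjugation. Consequently an infinitesimal character of a $(\g_\BC, G^\sigma)$-module corresponds to a pair of characters of $\oZ(\g)$, one per factor (and similarly for $H$). Since the oscillator representation $\omega^\infty$, as a $(\g_\BC, G^\sigma)\times(\h_\BC, H^\sigma)$-module, is compatible with these product decompositions, the induced action on $\Theta(V)$ acts independently on the two factors. It therefore suffices to establish the formula on a single factor.

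Next I would apply Przebinda's explicit formula for a single factor. In the stable range with $G$ the smaller member, the result of \cite{PrzInf} states that, under the Harish-Chandra identifications $\oZ(\g)=\oS(\BC^n)^{W_n}$ and $\oZ(\h)=\oS(\BC^{n+a})^{W_{n+a}}$, the character of $\oZ(\h)$ acting on $\Theta(V)$ is represented by the concatenation of the character $(\lambda_1,\ldots,\lambda_n)\in \BC^n$ of $\oZ(\g)$ acting on $V$ with the half-sum of positive roots $\rho_{B_a}=(\tfrac12,\tfrac32,\ldots,\tfrac{2a-1}{2})\in \BC^a$ of $\fso_{2a+1}(\BC)$. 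This tail arises from the ``complementary'' copy of the orthogonal group which in the stable range pairs with the trivial $G$-isotypic component of the oscillator representation, and it encodes the infinitesimal character of the trivial representation of that orthogonal group. Applying this shift separately on each of the two factors of $\g\times \g$ gives the infinitesimal character asserted by the lemma in $\BC^{n+a}\times \BC^{n+a}$.

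The main obstacle is normalization bookkeeping. Przebinda's theorem is stated in considerable generality and depends on fixed choices of Cartan subalgebra, positive system, and normalization of the oscillator representation; one must verify that, with the conventions adopted in Section~1 of this paper, the complementary contribution indeed emerges as $(\tfrac12,\tfrac32,\ldots,\tfrac{2a-1}{2})$ (placed at the end of the tuple), rather than a Weyl-group twist of it. Once this matching is carried out---most efficiently by evaluating both sides on a convenient test input, for instance by using that in the stable range $\Theta$ sends the trivial representation of $G$ to an irreducible representation of $H$ whose infinitesimal character can be computed directly---the lemma follows.
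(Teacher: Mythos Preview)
Your approach is correct and coincides with the paper's: the lemma is simply stated as a consequence of Przebinda's correspondence of infinitesimal characters \cite{PrzInf}, with no further proof given. You have supplied the details the paper omits---the reduction via $\g_\BC\simeq \g\times\g$ to a single factor and the identification of the tail with the infinitesimal character of the trivial representation of the complementary orthogonal group---but the underlying argument is the same.
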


We will use result of the following proposition systematically.

\begin{prop}[\cite{J85} and  \cite{Vo89}] \label{p:orbit}
For every irreducible $(\g\times \g, G)-$module $V$, there is a
unique nilpotent orbit $\CO\in \overline{\mathrm{Nil}}(\g^*)$ such that the associated variety of
the annihilator ideal of $V$ equals $\overline \CO\times \overline \CO$, where
the overbar indicates the closure.
\end{prop}

In the notation of Proposition \ref{p:orbit}, we write $\mathrm{Orbit}(V)$
for the nilpotent orbit $\CO$. The same notation applies to irreducible
$(\h\times \h, H)-$modules.

\begin{lem}\label{theta3}
  Suppose that $V$ is an irreducible $(\g\times \g, G)-$module. Then there exists an irreducible subquotient $\eta(V)$ of $\Theta(V)$ whose orbit satisfies the following conditions:
 \[
  {\rcc_1(\orb(\eta(V)) = 2a+1 } \quad and \quad \nabla(\mathrm{Orbit}(\eta(V)))=\mathrm{Orbit}(V).
\]
\end{lem}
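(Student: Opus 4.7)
The plan is to deduce the lemma from the moment-map analysis of theta lifting for complex dual pairs in the stable range, and then to extract an irreducible subquotient realizing the maximal orbit. Let $\W = \Hom_\BC(\BC^{2n}, \BC^{2n+2a+1})$ be the symplectic vector space underlying the pair $(G,H)$, and let $\phi_G : \W \to \g^*$ and $\phi_H : \W \to \h^*$ be the associated moment maps; these are $G$- and $H$-equivariant, and they implement the orbit correspondence for the dual pair.

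First I would introduce the candidate orbit. Let $\cO := \orb(V) \subset \g^*$, with Young diagram $\bfdd$ of size $2n$, and let $\cO' \subset \h^*$ be the nilpotent orbit whose Young diagram $\bfdd'$ is obtained by prepending a column of length $2a+1$ to $\bfdd$. Since $\bfdd$ has type $\mathrm{C}$, its odd rows occur with even multiplicity; the even parts of $\bfdd'$ are exactly these odd rows increased by $1$, so $\bfdd'$ is a valid type-$\mathrm{B}$ partition of $2n+2a+1$ (in particular, since $a \geq n$, $\bfdd'$ has at most $2a+1$ rows, so the prescription is unambiguous). By construction $\mathrm{c}_1(\cO') = 2a+1$ and $\nabla(\cO') = \cO$, so $\cO'$ is precisely the orbit we need.

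Next I would identify $\cO'$ with the theta-lift of $\cO$ at the level of orbits. Since $a \geq n$, the moment map $\phi_G$ is surjective onto the nilpotent cone of $\g^*$, and the Kraft--Procesi orbit-correspondence theorem (as elaborated, e.g., by Daszkiewicz--Kraskiewicz--Przebinda, Nishiyama--Ochiai--Zhu, and Loke--Ma for the pair $(\Sp_{2n}(\BC), \oO_{2n+2a+1}(\BC))$ in the stable range) gives $\phi_H(\phi_G^{-1}(\overline{\cO})) = \overline{\cO'}$. Combining this with the standard description of the associated variety of $\sY$ as the graph of the moment-map correspondence on the nilpotent cone, and with the behaviour of associated varieties under coinvariants, one sees that the associated variety of $\Theta(V)$ as an $(\h \times \h, H)$-module is contained in $\overline{\cO'} \times \overline{\cO'}$.

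Finally I would extract the required irreducible subquotient. By Lemma \ref{theta1}, $\Theta(V)$ has finite length, so its associated cycle is a finite positive integral combination of cycles $[\overline{\cO_i \times \cO_i}]$ indexed by its irreducible subquotients. Using the Bernstein-degree / leading-term formula for theta lifts in the stable range (based on J.-S.~Li's nonvanishing theorem and the explicit leading-term computations of Nishiyama--Ochiai--Zhu and Loke--Ma), the cycle $[\overline{\cO' \times \cO'}]$ appears in the associated cycle of $\Theta(V)$ with positive multiplicity. Consequently some irreducible subquotient $\eta(V)$ satisfies $\orb(\eta(V)) = \cO'$, and for this $\eta(V)$ we obtain $\mathrm{c}_1(\orb(\eta(V))) = 2a+1$ and $\nabla(\orb(\eta(V))) = \cO = \orb(V)$. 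The main obstacle is precisely this last step: showing the maximal orbit is actually attained by an irreducible subquotient, rather than merely lying in the closure of the associated variety, which is where the full strength of the stable-range condition $a \geq n$ enters through the nonvanishing of the leading coefficient in the associated cycle of $\Theta(V)$.
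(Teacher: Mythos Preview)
Your proposal is correct and draws on the same circle of results as the paper (the orbit correspondence of Daszkiewicz--Kra\'skiewicz--Przebinda and the associated-variety computations of Loke--Ma in the stable range), but you organize the argument differently. You split the problem into an upper bound (the associated variety of $\Theta(V)$ lies in $\overline{\cO'}\times\overline{\cO'}$) and a lower bound (the top orbit $\cO'$ really occurs, via leading-term/Bernstein-degree computations), and then you extract $\eta(V)$ from additivity of the associated cycle. The paper is more direct: it invokes Loke--Ma [Theorem~D] together with Vogan's general result on associated varieties to conclude at once that the associated variety of the annihilator of $\Theta(V)$ is \emph{irreducible}, hence the closure of a single orbit $\mathrm{Orbit}(\Theta(V))$; DKPC then identifies this orbit as your $\cO'$. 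With irreducibility in hand, the extraction step becomes trivial: any irreducible quotient of $\Theta(V)$ with maximal Gelfand--Kirillov dimension necessarily has the same orbit.

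The advantage of the paper's route is that it bypasses the somewhat delicate cycle-level additivity argument (which, as you implicitly note, only holds for top-dimensional components) and the separate leading-term computation; a single irreducibility statement does the work of both of your bounds. Your route has the pedagogical merit of making the moment-map geometry explicit, but the step you flag as the ``main obstacle'' is precisely what the paper absorbs into the single citation of Loke--Ma combined with Vogan.
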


\begin{proof} We know from a result of Loke and Ma \cite[Theorem
  D]{LM}, combined with a general result of Vogan \cite[Theorem
  8.4]{Vo89}, that the associated variety of the annihilator ideal of
  $\Theta (V)$ is irreducible. Denote its open orbit by
  $\mathrm{Orbit}(\Theta(V))$. From the explicit description of
  nilpotent orbit correspondence in \cite{DKPC},
  $\mathrm{Orbit}(\Theta(V))$ satisfies the two stated conditions of
  the current lemma.
We then pick an irreducible quotient $\eta(V)$ of $\Theta(V)$ whose
Gelfand-Kirillov dimension is that of $\Theta(V)$. Obviously $\mathrm{Orbit}(\eta(V))=\mathrm{Orbit}(\Theta(V))$. This proves
the lemma.
\end{proof}

For every ideal $I$ of $\oU(\g)$, $\oU(\g)/I$ is a $(\g\times \g, G)-$module such that
\[
  (X, Y)\cdot v=Xv-vY, \quad \textrm{for all } \, X, Y\in \g, \, v\in \oU(\g)/I.
\]
This module is generated by a spherical vector (the image of the
vector $1$).
Note that $I$ is maximal if and
only if $\oU(\g)/I$ is irreducible. Furthermore every spherical
irreducible $(\g\times \g, G)$-module is isomorphic to $\oU(\g)/I$ for a unique
maximal ideal $I$ of $\oU(\g)$. See \cite[Section 9.6]{Dix}.

\begin{prop}\label{theta4}
Suppose that $V$ is a spherical irreducible unitarizable $(\g\times \g, G)-$module. Then $\Theta(V)$ is also a spherical irreducible unitarizable $(\h\times \h, H)-$module.
\end{prop}

\begin{proof} The fact that $\Theta(V)$ is spherical follows from the correspondence of $G^{\sigma}$ and $H^{\sigma}$ types in the space of
joint harmonics \cite[Section 3]{Howe89}. The irreducibility is in
\cite[Theorem~A]{LM}, and the unitarity is in \cite{Li89}, all because
of the stable range condition.
\end{proof}

\medskip

For each nilpotent orbit $\ckcO\in \overline{\mathrm{Nil}}(\check \fgg)$,
let $V_{\ckcO}$ be the spherical irreducible $(\fgg\times \fgg, G)-$module with
infinitesimal character $(\chico, \chico)$.
We will need the following key
property of $V_{\ckcO}$ from \cite{B89}.

\begin{prop} \label{prop:sphu}
  The $(\fgg\times \fgg, G)-$module $V_{\ckcO}$ is unitarizable.
\end{prop}
\begin{proof}
  We pair the rows in the Young diagram of $\ckcO$ by their length. Without loss
  of generality, we can assume that
  $\ckcO$ has row lengths as a multi-set (set with multiplicities)
  \[
    \set{a_{1}, a_2,\cdots, a_{2p-1},a_{2p}, b_1, b_1, b_2, b_2, \cdots, b_l,
      b_l| a_i \in 2\bN, a_i < a_{i+1}, b_i \in \bN}.
  \]
  Note that $a_1< \cdots< a_{2p}$ are distinct even integers and we allow $a_1=0$.
  Let $a := \sum_{i=1}^{2p} a_i$,
  $\ckfgg_a:=\fgg_a := \fsp_{2a}(\bC)$, $G_a = \Sp_{2a}(\bC)$,
  and $\ckcO_0$ be the nilpotent orbit in
  $\overline{\mathrm{Nil}}(\ckfgg_a)$ whose Young diagram has rows $a_1<a_2< \cdots<
  a_{2p}$.

  The spherical $(\fgg_a\times \fgg_a, G_a)$-module $V_{\ckcO_0}$ has infinitesimal character $(\chi_{\ckcO_0},\chi_{\ckcO_0})$, where
  $\ckcO_0$ has row lengths
  \[
    \set{a_{1}, a_2,\cdots, a_{2p-1},a_{2p}},
  \]
  and $\chi_{\ckcO_0}$ is defined similarly as in \eqref{chico}.
  The unitarity of $V_{\ckcO_0}$ is a special
case of \cite[Proposition~10.6]{B89}.

  Now we conclude that $V_\ckcO$ is unitarizable since it is the spherical component of the
  unitary induction
  \[
  \Ind^{\Sp_{2n}(\bC)}_{P} (V_{\ckcO_0}\otimes \bfone\otimes \cdots\otimes  \bfone),
  \]
  where $P$ has Levi factor $\Sp_{2a}(\bC)\times \GL_{b_1}(\bC) \times
  \GL_{b_2}(\bC) \times \cdots \times \GL_{b_l}(\bC)$ and
  $\bfone$ denotes the trivial representations on
  $\GL_{b_i}(\bC)$ factors. (In fact the induced module is irreducible \cite[Sections 4 and 5]{B89}.)
  \end{proof}

\section{Proof of Theorems \ref{thm13} and \ref{thm16}}
\label{sec:proof1}

\subsection{Proof of Theorem \ref{thm13}}
We are in the setting of Theorem \ref{thm13}. Recall that $I$ is a primitive ideal of $\oU(\g)$ with a
metaplectic integral infinitesimal character $\chi_{I}$
(\Cref{def:mpli}), i.e. its coordinates are in
$(\frac{1}{2}+\BZ)^n$. Denote by $\CO\in
\overline{\mathrm{Nil}}(\g^*)$ the nilpotent orbit such that the
associated variety of $I$ equals the closure of $\CO$ (see \Cref{p:orbit}).

\begin{lem}\label{exv}
There exists an irreducible  $(\g\times \g, G)-$module $V$ such that its infinitesimal character is $(\chi_{I},\chi_{I})$,
and $\mathrm{Orbit}(V)=\CO$.
\end{lem}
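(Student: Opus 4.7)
The strategy is to exhibit $V$ as an irreducible Harish-Chandra bimodule for the complex group $G = \Sp_{2n}(\BC)$ (equivalently, an irreducible $(\g \times \g, G)$-module, under the identification recalled in Section~4) whose left annihilator in $\oU(\g)$ is precisely $I$. Once such a $V$ is produced, its bilateral infinitesimal character is automatically $(\chi_I, \chi_I)$, which lies in $(\tfrac{1}{2}+\BZ)^n \times (\tfrac{1}{2}+\BZ)^n$ by the metaplectic integrality hypothesis on $\chi_I$, so the first required property is immediate.

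First, invoke Duflo's theorem (cf.\ \cite{Dix}) to write $I = \Ann_{\oU(\g)} L(\lambda)$ for some irreducible highest weight module $L(\lambda)$, where $\lambda \in \t^*$ represents $\chi_I$. Next, by the Zhelobenko--Duflo classification of irreducible Harish-Chandra modules for complex reductive groups, choose an irreducible $(\g \times \g, G)$-module $V$ with infinitesimal character $(\chi_I, \chi_I)$ whose left annihilator equals $I$; one may realize $V$, for instance, as an appropriate irreducible subquotient of the full principal series of $G$ with parameter $\lambda$ on both sides.

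It remains to verify $\mathrm{Orbit}(V) = \CO$. Since the left annihilator of $V$ equals $I$, its associated variety in $\g^*$ is $\overline{\CO}$. By the classical theorem that the left and right associated varieties of an irreducible Harish-Chandra bimodule coincide (Borho--Brylinski / Joseph), the right annihilator of $V$ also has associated variety $\overline{\CO}$. Consequently, the associated variety of the full annihilator of $V$ in $\oU(\g) \otimes \oU(\g)$ equals $\overline{\CO} \times \overline{\CO}$, which by definition is the condition $\mathrm{Orbit}(V) = \CO$.

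The only nontrivial input is the exhaustion statement that every primitive ideal of $\oU(\g)$ arises as the left annihilator of some irreducible Harish-Chandra bimodule of the complex group $G$; this is classical, following from Duflo's theorem together with the structure of principal series for complex reductive groups. The coincidence of left and right associated varieties is a well-known general fact for irreducible Harish-Chandra bimodules, so it poses no further obstacle.
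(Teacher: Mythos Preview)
Your argument is correct, but the paper takes a more elementary route. Instead of invoking Duflo's theorem and the Zhelobenko--Duflo classification to manufacture an irreducible Harish-Chandra bimodule with prescribed left annihilator, the paper simply sets $V_1 := \oU(\g)/I$, notes that this $(\g\times\g,G)$-module has finite length with infinitesimal character $(\chi_I,\chi_I)$ and with annihilator of associated variety $\overline\CO\times\overline\CO$, and then takes $V$ to be any irreducible subquotient of $V_1$ of maximal Gelfand--Kirillov dimension. The maximality forces $\mathrm{Orbit}(V)=\CO$.

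Your approach has the advantage of producing a $V$ whose left annihilator is \emph{exactly} $I$, which is sharper information than needed; the cost is importing the exhaustion theorem for primitive ideals via Harish-Chandra bimodules and the Borho--Brylinski/Joseph coincidence of left and right associated varieties. The paper's approach avoids these by working directly with the bimodule $\oU(\g)/I$, whose left and right annihilators are both manifestly $I$; the only subtlety is then passing to an irreducible subquotient without losing the orbit, which the GK-dimension argument handles. Both are valid; the paper's is shorter and more self-contained for the modest conclusion actually required.
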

\begin{proof}
Put $V_I:=\oU(\g)/I$, viewed as an admissible $(\g\times \g, G)-$module.
Then $V_I$ has finite length and infinitesimal character $(\chi_{I},\chi_{I})$.
Its associated variety equals $\overline
\CO\times \overline \CO$. Let $V$ be an irreducible subquotient of
$V_I$ with the largest Gelfand-Kirillov dimension. Its
  annihilator $J\otimes\oU(\g)+\oU(\g)\otimes \check J$ satisfies $I\subset J$, and the Gelfand-Kirillov dimensions are
  equal.  Here $\check J$ denotes the image of $J$ under the principal anti-automorphism of $\oU(\g)$ (which is $-1$ on $\g$).
  We conclude that $J=I$ by \cite[Korollar 3.6]{BK},
  and the associated variety of the annihilator
  ideal of $V$ as a $(\mathfrak g\times\mathfrak g,G)-$module also equals $\overline \CO\times \overline \CO$. This proves the lemma.
 \end{proof}

 Let $V$ be as in Lemma \ref{exv}, and let $\eta(V)$ be an irreducible subquotient of $\Theta (V)$, as in Lemma
 \ref{theta3}. By Lemmas \ref{theta2}, $\eta(V)$ has an integral
 infinitesimal character. By  \cite[Definition
 1.10]{BVUni} and the remarks immediately after, we conclude that $\mathrm{Orbit}(\eta(V))$ is
special, i.e. $\mathrm{Orbit}(\eta(V))\in
 \overline{\mathrm{Nil}}^{\mathrm{sp}}(\h^*)$. By Lemma
 \ref{theta3},  $\mathrm c_1(\orb(\eta(V)))=2a+1$ and $\nabla(\mathrm{Orbit}(\eta(V)))= \mathrm{Orbit}(V)$, which is $\CO$.
Therefore $\CO$ is metaplectic special by Proposition \ref{cdnn}. This completes the proof of Theorem \ref{thm13}.

\subsection{Proof of Theorem \ref{thm16}}

We are in the setting of Theorem \ref{thm16}. Recall that
$\check \CO\in \overline{\mathrm{Nil}}(\check \g)$ and $I_{\check \CO}$ is the maximal
ideal of $\oU(\g)$ with infinitesimal character $\chi_{\check \CO}$.
Then $V_{\check \CO}=\oU(\g)/I_{\check \CO}$ is the spherical irreducible
$(\g\times \g, G)-$module with infinitesimal character $(\chico, \chico)$.
By Proposition~\ref{prop:sphu}, $V_{\ckcO}$ is unitarizable.

Let $\check \CO'$ be the element in $\overline{\mathrm{Nil}}(\s\p_{2n+2a}(\BC))$ such that $\mathrm{r}_1(\check \CO')=2a$ and $\check \nabla(\check \CO')=\check \CO$. The Lie algebra  $\s\p_{2n+2a}(\BC)$ is the  Langlands dual of $\h=\o_{2n+2a+1}(\BC)$, and $\check \CO'$ determines a character $\chi_{\check \CO'}: \oZ(\h)\rightarrow \BC$ in the usual way (see \eqref{usual-chico}). Suppose that $\chi_{\check \CO}$ is represented by  $(\lambda_1, \lambda_2, \cdots, \lambda_n)$, then  $\chi_{\check \CO'}$ is represented by
\[
  (\lambda_1, \lambda_2, \cdots, \lambda_n, \frac{1}{2}, \frac{3}{2}, \cdots, \frac{2a-1}{2}).
\]

By Lemma \ref{theta2} and Proposition \ref{theta4}, $\Theta(V_{\check \CO})$ is a spherical irreducible unitarizable $(\h\times \h, H)$-module with the infinitesimal character $(\chi_{\check \CO'},\chi_{\check \CO'})$.
Hence
\[
  \Theta(V_{\check \CO})\cong \oU(\h)/I'
\]
where $I'$ is the maximal ideal of $\oU(\h)$ with infinitesimal
character $\chi_{\check \CO'}$. By Theorem \ref{DesBV}, we
know that the associated variety of $I'$ equals the closure of $\mathrm d_{\mathrm{BV}}(\check \CO')$, namely
\be\label{orbitt} \mathrm{Orbit}(\Theta(V_{\check \CO}))=\mathrm d_{\mathrm{BV}}(\check
\CO').  \ee Finally, we see that the associated variety of $I_{\check \CO}$ is the closure of
\begin{eqnarray*}
   && \mathrm{Orbit}(V_{\check \CO}) \\
   &=& \nabla(\mathrm{Orbit}(\Theta(V_{\check \CO})))\qquad \quad \quad \textrm{by Lemma \ref{theta3}}\\
&=&\nabla(\mathrm d_{\mathrm{BV}}(\check \CO')) \qquad  \quad \quad \qquad \,\,\textrm{by \eqref{orbitt}}\\
&=&\tilde{\mathrm d}_{\mathrm{BV}}(\check \nabla(\check \CO')) \qquad  \quad \quad \qquad \,\,\textrm{by Proposition \ref{cdnn}}\\
&=&\tilde{\mathrm d}_{\mathrm{BV}}(\check \CO).
\end{eqnarray*}
This completes the proof of Theorem \ref{thm16}.

\begin{remark} The above proof did not use the unitarity of  $\Theta(V_{\check \CO})$. Nonetheless, the unitarity of $V_{\check \CO}$ (and the stable range) was used in order to conclude the irreducibility of $\Theta(V_{\check \CO})$.
\end{remark}

\def\cCO{{\check{\CO}}}

\section{Weyl group representations and second proof of Theorems \ref{thm13} and \ref{thm16}}
\label{sec:proof2}

The purpose of this section is to give another proof of Theorems \ref{thm13} and \ref{thm16}, based on the Vogan duality (see \cite{VoIC4} and \cite[Chapter 21]{ABV}). (For connected reductive complex Lie groups, this amounts to certain relation between the Kazhdan-Lusztig polynomials in \cite{KL}.)

In this second proof, we will compute the associated variety of the annihilator ideal of a certain irreducible representation of $G=\Sp_{2n}(\C)$. When a representation has integral infinitesimal character, the answer is found in \cite{BVUni} and is conveniently expressed, via the Springer correspondence, in terms of Weyl group representations. The reduction step to integral infinitesimal character, again via the Springer correspondence, requires us to apply the $j$-operation (or the truncated induction) of Weyl group representations. The proof of Theorems \ref{thm13} and \ref{thm16} thus boils down to a certain abstract property as well as a concrete computation of the $j$-operation (from the integral Weyl group to the Weyl group).

Along the way of the proof, we also introduce the notion of metaplectic double cells and metaplectic special representations.

\subsection{Reduction to integral infinitesimal character}\label{secred}

In this subsection only, let $G$  be an arbitrary connected reductive complex Lie group.  Let $H$ be a Cartan subgroup of $G$ and let $\lambda: H\rightarrow \C^\times$ be a character. The complexified differential of $\lambda$ has the form
\[
  (\lambda_L, \lambda_R): \h\otimes_\R\C=\h\times \bar \h\rightarrow \C,
\]
where $\h$ denotes the Lie algebra of $H$, and $\bar \h$ is the complex Lie algebra equipped with a conjugate linear isomorphism $\h\rightarrow \bar \h,\ x\mapsto \bar x$. Write $\overline{\lambda_R}\in \h^*$ for the composition of
\[
 \h\xrightarrow{x\mapsto \bar x}\bar \h\xrightarrow{\lambda_R}\C\xrightarrow{\textrm{complex conjugation}}\C.
\]
Then  $\lambda_L-\overline{\lambda_R}\in \h^*$ is the differential of a holomorphic character of $H$. In particular, it is a  weight in the sense that
\be\label{intl}
\la \check \alpha, \lambda_L-\overline{\lambda_R}\ra \in \Z\quad \textrm{for every coroot $\check \alpha\in \h$ of $G$. }
\ee

Write $\widecheck G$ for the Langlands dual group of $G$. Let
$\widecheck H$  be a Cartan subgroup of it. As usual, we identify the root datum of $(\widecheck G, \widecheck H)$ with the dual of that of $(G,H)$.
Write $\widecheck G(\lambda)$ for the connected complex reductive subgroup of $\widecheck G$ containing $\widecheck H$ whose root system equals
\[
 \{\check \alpha\in \h \textrm{ is coroot of $G$}\,:\, \la \check \alpha, \lambda_L\ra \in \Z\}.
\]
Write $G(\lambda)$ for the Langlands dual of $\widecheck G(\lambda)$. As usual, we also view $H$ as a Cartan subgroup of $G(\lambda)$.

Write $\overline X_G(\lambda)$ for the irreducible subquotient of $X_G(\lambda)$ containing the  $K$-type of extremal weight $\lambda|_T$, where $T$ is the maximal compact torus in $H$, $K$ is a maximal compact subgroup of $G$ containing $T$, and $ X_G(\lambda)$ is the principal series representation of $G$ induced from the character $\lambda$ by normalized smooth parabolic induction. Here we fix an arbitrary  Borel subgroup of $G$ containing $H$ for the parabolic induction, and the isomorphism class of the representation $\overline X_G(\lambda)$ is  independent of the choices of $K$ and the Borel subgroup. By the work of Zhelobenko, every irreducible admissible representation of $G$ is of this form up to infinitesimal equivalence. Likewise we have the irreducible representation $\overline X_{G(\lambda)}(\lambda)$ of $G(\lambda)$. Note that $\overline X_{G(\lambda)}(\lambda)$ has integral infinitesimal character, by its very definition.

For a finite group $E$, let $\Irr(E)$ denote the set of isomorphism classes of irreducible complex representations of $E$.
Let $W\subset \GL(\h)$ denote the Weyl group of $G$. For every irreducible admissible
representation $\pi$ of $G$, write
\[\sigma(\pi ):=\mathrm{Springer} (\orb(\pi ))\in \Irr(W).\]
Here and henceforth, ``$\mathrm{Springer}$" indicates the irreducible representation of the Weyl group corresponding to the trivial local system on the nilpotent orbit under Springer correspondence \cite{Spr}. Also ``$\orb$'' indicates the nilpotent orbit as in \Cref{p:orbit}. Specifically, we have an irreducible representation $\sigma(\overline{X}_G(\lambda))\in \Irr(W)$. Likewise  let $W(\lambda)\subset W$ denote the integral Weyl group, namely the Weyl group of $G(\lambda)$, and we have an irreducible representation $\sigma(\overline X_{G(\lambda)}(\lambda))\in \Irr(W(\lambda))$.

\def\barX{\overline{X}}
\def\barfgg{\overline{\fgg}}
\def\barfhh{\overline{\fhh}}
\begin{prop}\label{redc}
  Let the notation be as above. Then as irreducible representations of $W$,
  \[
  \sigma( \overline X_G(\lambda))\cong j_{W(\lambda)}^W \sigma(\overline X_{G(\lambda)}(\lambda))).
  \]
  Here the symbol $j$ denotes the $j$-operation, as in \cite[Section 11.2]{Carter}.
\end{prop}
\begin{proof}
Fix a positive system of the root system of $(\fgg\times \barfgg, \fhh\times \barfhh)$ such that $(\lambda_L,\lambda_R)$ is dominant. Then there is a unique coherent family $\Psi_G$ such that $\Psi_G(\lambda) = \barX_G(\lambda)$
and $\Psi_G(\lambda')$ is irreducible when $\lambda'$ is regular dominant. See \cite[Chapter 7]{Vg}.
By \cite[Section 5]{J80} and \cite[Theorem~1.2]{King}, $\sigma( \overline X_G(\lambda))\otimes \sigma( \overline X_G(\lambda))$ is generated by the Bernstein degree polynomial $c_{\barX_G}(\lambda)$ of $\Psi_G$
as a $W\times W$-subrepresentation in $S(\fhh\times \barfhh)$.
A similar statement applies to $\barX_{G(\lambda)}(\lambda)$.

By the Vogan duality \cite[Theorem 1.15]{VoIC4}, $c_{\barX_G}(\lambda)$ and $c_{\barX_{G(\lambda)}}(\lambda)$ are equal up to a scalar multiple.
Note that $c_{\barX_{G(\lambda)}}(\lambda)$ is a $W(\lambda)\times W(\lambda)$-harmonic polynomial (invariant under the action of the diagonal $W(\lm)$) (see \cite{BVlocal}*{Proposition~4.7}).  The proposition follows by the definition of $j$-operation (\cite{Carter}*{p. 368}).
\end{proof}

\subsection{Metaplectic double cells and metaplectic special representations}
\label{sec:mcell}
Recall from \Cref{sec:intro} that $\sfW_{n}$ ($n\geq 0$) is the Weyl group of type $\mathrm C_n$, 
which is identified with the subgroup of $\GL_n(\bC)$ generated by the permutation matrices and the diagonal matrices with diagonal  entries $\pm 1$. It is viewed as a Coxeter group with a system of simple reflections determined by the positive root system (see \eqref{eq:roots}):
\[
  \Phi^+=\{e_i\pm e_j\mid 1\leq i<j\leq n\}\sqcup \{2e_i\mid 1\leq i\leq n\}\subset \C^n.
\]
Let $\sfW'_{n}$ be the Weyl
group of type $\mathrm D_{n}$, identified as a subgroup of $\sfW_{n}$ in the standard way, to be viewed as a Coxeter group with a system of simple reflections determined by the positive root system
\[
  \Psi^+:=\{e_i\pm e_j\mid 1\leq i<j\leq n\}\subset \C^n.
\]

We use $\sim$ to denote the (usual) double cell relation on $\Irr(\sfW'_{n})$, see \cite{Lu.CF}*{Chapter~4} and \cite{Carter}*{13.2}. Then the natural  action of $\sfW_n$ on $\Irr(\sfW'_{n})$ descends to an action on $\Irr(\sfW'_{n})/\sim $. We define big double cells in $\Irr(\sfW'_{n})$ by gluing double cells in the same orbit of the $\sfW_n$-action, as follows.
Given $\sigma'_{1}, \sigma'_{2} \in \Irr(\sfW'_{n})$, we define $\sigma'_{1}\osim \sigma'_{2}$ if there exists $s\in \sfW_{n}$ such that
$\sigma'_{1}\sim (\sigma'_{2})^{s}$.
The equivalent classes of $\osim$ are called big double cells in  $\Irr(\sfW'_{n})$.

We now define the notion of metaplectic double cells in $\Irr(\sfW_{n})$ in terms of
big double cells in $\Irr(\sfW'_{n})$, as follows. Given $\sigma_{1}, \sigma_{2}\in \Irr(\sfW_{n})$, we define $\sigma_{1}\msim \sigma_{2}$
if $\sigma'_{1}\osim \sigma'_{2}$ for some (hence all) irreducible constituents $\sigma'_{1}$ of
$\sigma_{1}|_{\sfW'_{n}}$ and $\sigma'_{2}$ of $\sigma_{2}|_{\sfW'_{n}}$.
The equivalent classes of $\msim$ are called metaplectic double cells in $\Irr(\sfW_{n})$.

By the Clifford theory and the explicit description of double cells in $\Irr(\sfW'_{n})$ (see \cite{Lu.CF}*{(4.5.6)} and \cite{BVPri2}*{Theorem~2.29}), we now describe metaplectic double cells. Let $\cC\in \Irr(\sfW_n)/\msim$.
We have two cases:
\begin{itemize}
    \item[\MA] If  there is
    $\sigma\in \cC$ such that $\sigma|_{\sfW'_n}$ is reducible,
     then  $\cC$ is the singleton $\set{\sigma}$. Moreover $\sigma|_{\sfW'_n}$ is a direct sum of two irreducible representations, say $\sigma'_1$ and $\sigma'_2$, and $\set{\sigma'_1}$ and $\set{\sigma'_2}$ are two distinct double cells in $\Irr(\sfW'_{n})$ forming an orbit under the $\sfW_n-$action.
     \item[\MB] Otherwise,
     there is a unique double cell $\cC'$ in $\Irr(\sfW'_n)$  such that
     \[
     \begin{split}
    \cC  & = \set{\sigma\in \Irr(\sfW_n) | \sigma|_{\sfW'_n}\in \cC'} \\
        & = \set{\sigma \in \Irr(\sfW_n) | \text{$\sigma$ occurs in $\Ind_{\sfW'_n}^{\sfW_n} \sigma'$, for some $\sigma'\in \cC'$
        }}.
    \end{split}
     \]
     Moreover, every element in $\cC'$ is fixed under the $\sfW_n$-action.
\end{itemize}
In summary, we have bijections
\[
(\Irr(\sfW_n)/\msim) \overset{\iotacell}{\longrightarrow}   (\Irr(\sfW'_n)/\osim )\overset{\iota_o } {\longrightarrow}\Irrsp(\sfW'_n)/\sfW_n, \]
where
\[
\begin{array}{rcl}
&\iotacell(\cC)&=\cC':=\set{\sigma'\in \Irr(\sfW'_n)| \text{$\sigma'$ occurs in some $\sigma\in \cC$}}, \\
&\iota_o (\cC')&:=\cC'\cap \Irrsp(\sfW'_n).
    \end{array}
  \]
 Here and henceforth,  a superscript ``$\mathrm{sp}$" over ``$\Irr$" indicates the subset of special representations.

    The following lemma should be compared with \cite{BVPri2}*{Corollary~2.16}. Recall the notion of the fake degree of  $\sigma \in \Irr(\sfW_{n})$ (see \cite[Definition 2.14]{BVPri2} where it is denoted by $a(\sigma)$).

\begin{lem}
\label{lem:defms}
  For each metaplectic double cell $\cC \in \Irr(\sfW_{n})/\msim$, there is a unique
  irreducible representation $\sigma_{\cC}$ in $\cC$ having the minimal fake degree.
\end{lem}
\begin{proof}
  The lemma is clear in the case \MA.
  Otherwise, we are in case \MB.
  Then $\cC':=\iotacell(\cC)$ is a double cell in $\Irr(\sfW'_n)$.
  Let $\sigma'_{\cC'}$ be the unique special representation in $\cC'$. Since the fake degree does not decrease under $\Ind_{\sfW'_n}^{\sfW_n}$,
  it is clear that $\sigma_{\cC}:=j_{\sfW'_n}^{\sfW_n} \sigma'_{\cC'}$ (see \cite{Carter}*{p. 368} for the definition of $j$-operation) satisfies the required property.
\end{proof}

We call the representation $\sigma_{\cC}$ in \Cref{lem:defms} a metaplectic special representation. Denote by $\Irrms(\sfW_n)$ the set of metaplectic special representations in $\Irr(\sfW_n)$.
Then we have a bijection
\[
    \begin{array}{rcl}
    (\Irr(\sfW_n)/\msim ) & \longrightarrow & \Irrms(\sfW_n)\\
    \cC & \mapsto & \text{the metaplectic special representation in $\cC$}.
    \end{array}
\]

We will use $\sgn$ to denote the sign character of a Weyl group (under consideration). We may occasionally put a subscript Weyl group under $\sgn$ in order to minimize possible confusion.

Recall the bijective map
$\iota_o\circ \iotacell$:
$(\Irr(\sfW_n)/\msim) \longrightarrow \Irrsp(\sfW'_n)/\sfW_n$. In fact we have the following

\begin{prop}\label{lemmpj}
The $j$-operation yields a bijective map
  \[
 j_{\sfW_n'}^{\sfW_n}: \Irr^{\mathrm{sp}}(\sfW'_n)/\sfW_n\rightarrow  \Irr^{\mathrm{ms}}(\sfW_n).
  \]
  Moreover, $ j_{\sfW_n'}^{\sfW_n}$ is compatible with twisting of the sign character:
  \begin{equation}
  \label{eq:ms.sgn}
j_{\sfW'_n}^{\sfW_n} (\sigma \otimes \sgn_{\sfW'_n}) =  (j_{\sfW'_n}^{\sfW_n}\sigma)\otimes \sgn_{\sfW_n}
\end{equation}
for each $\sigma \in \Irrsp(\sfW'_n)$.
\end{prop}
\begin{proof}
This is clear from the proof of \Cref{lem:defms}.
\end{proof}

\begin{prop}\label{prop:tdSP}
The diagram
\[
\begin{CD}
\overline{\mathrm{Nil}}^{\mathrm{sp}}(\o_{2n}(\C)) @>\tdSP>> \overline{\mathrm{Nil}}^{\mathrm{ms}}(\s\p_{2n}(\C)^*)\\
@V\mathrm{Springer}  VV                                     @VV\mathrm{Springer}   V\\
\Irr^{\mathrm{sp}}(\sfW_n')/\sfW_n @>j_{\sfW'_n}^{\sfW_n} >>  \Irr^{\mathrm{ms}}(\sfW_n)\\
\end{CD}
\]
commutes, where the map $\tdSP$ in the top horizontal arrow is defined  in \eqref{tdsp00}.
Here the left vertical arrow is the map induced by the Springer correspondence of $\fso_{2n}(\bC)$.
\end{prop}
\begin{proof}
  Take $\cO \in \bNilsp(\foo_{2n}(\bC))$.
  If the Young diagram of $\cO$ has only even columns,
  the proposition is easily verified.
  In general, $\cO$ can be written as
  \[
  \begin{split}
    \cO =&
    \Ind (\cO'\times \cO_0) \\
    & (\textrm{nilpotent $\rO_{2n}(\bC)-$orbit in $\s\o_{2n}(\C)$ induced from an appropriate Levi subalgebra}),\\
  \end{split}
  \]
  where $\cO'$ is a nilpotent orbit in a general linear Lie algebra whose Young diagram has only odd columns and $\cO_0$ is a nilpotent orbit in an orthogonal Lie algebra whose Young diagram has only even columns. See \cite{LS} or \cite[Section 7.1]{CM} for the notion of induced nilpotent orbit. Now one  verifies the following compatibility equation:
    \[
    \begin{split}
    \tdSP(\cO) =&  \Ind (\cO' \times \tdSP(\cO_0)) \smallskip \\
    & (\textrm{nilpotent orbit in $\s\p_{2n}(\C)$ induced from an appropriate Levi subalgebra}).
    \end{split}
    \]
Since Springer correspondence and $j$-operation are compatible with parabolic induction of nilpotent orbits (\cite[Theorem~3.5]{LS}), the proposition follows.
\end{proof}

\begin{cor}\label{mpsp}
  The  Springer correspondence yields a bijective map
  \[
  \mathrm{Springer}: \overline{\mathrm{Nil}}^{\mathrm{ms}}(\s\p_{2n}(\C)^*)\rightarrow \Irr^{\mathrm{ms}}(\sfW_n).
  \]
  Moreover, the inverse image of $\Irr^{\mathrm{ms}}(\sfW_n)\, (\subset \Irr(\sfW_n))$ under
  \begin{equation}\label{eq:springer}
   \mathrm{Springer}: \overline{\mathrm{Nil}}(\s\p_{2n}(\C)^*)\rightarrow \Irr(\sfW_n)
  \end{equation}
  equals $\overline{\mathrm{Nil}}^{\mathrm{ms}}(\s\p_{2n}(\C)^*)$.
\end{cor}
\begin{proof}
The first assertion follows from bijectivivity of the top, left and bottom arrows in \Cref{prop:tdSP}.
The second assertion follows from the injectivity of \eqref{eq:springer}.
\end{proof}

\subsection{Second proof of \Cref{thm13}}
We are back in the setting of Theorem \ref{thm13}. Recall that $\g=\s\p_{2n}(\bC)$, and $I$ is a primitive ideal of $\oU(\g)$ with a
metaplectic integral infinitesimal character $\chi_{I}$
(\Cref{def:mpli}), i.e. its coordinates are in
$(\frac{1}{2}+\BZ)^n$. Denote by $\CO\in
\overline{\mathrm{Nil}}(\g^*)$ the nilpotent orbit such that the
associated variety of $I$ equals the closure of $\CO$ (see \Cref{p:orbit}).

We let $G=\Sp_{2n}(\C)$, and  $V$ be as in Lemma \ref{exv}. Let $H$ be a Cartan subgroup of $G$. Take a character $\lambda:H\rightarrow \C^\times$ so that $V\cong \overline X_G(\lambda)$. Then $G(\lambda)=\SO_{2n}(\C)$, and $\orb(\overline X_{G(\lambda)}(\lambda))$ is a special nilpotent $\SO_{2n}(\C)$-orbit in $\s\o_{2n}(\C)^*$ (\cite[Definition 1.10]{BVUni} and the paragraph immediately after). Thus \[
  \sigma(\overline X_{G(\lambda)}(\lambda))\in \Irr^{\mathrm{sp}}(\sfW_n').
\]
It follows from \Cref{redc} and \Cref{lemmpj} that
\[
  \sigma(\overline X_{G}(\lambda))\in \Irr^{\mathrm{ms}}(\sfW_n).
\]
By our construction of $V$, we have $\CO=\orb(\overline X_{G}(\lambda))$.  \Cref{mpsp} now implies that $\CO$ is metaplectic special. This proves Theorem A.

\subsection{Second proof of \Cref{thm16}}
We are back in the setting of Theorem \ref{thm16}. Recall that $\g=\check \g=\s\p_{2n}(\bC)$,
$\check \CO\in \overline{\mathrm{Nil}}(\check \g)$ and $I_{\check \CO}$ is the maximal
ideal of $\oU(\g)$ with infinitesimal character $\chi_{\check \CO}$.

We retain the notation of Section \ref{secred}.
We again let $G=\Sp_{2n}(\C)$ so we have an identification  $H=(\C^\times)^n$ as usual. Then $\h=\bar \h=\C^n$. Suppose that $(\lambda_L, \lambda_R)=(\chi_{\check \CO}, \chi_{\check \CO})$.
We identify the nilpotent orbit $\ckcO$ with its Young diagram.
Let $\ckcO_g$ be the Young diagram formed by the even rows of $\ckcO$.
Since odd rows occur with even mulplicity in $\ckcO$, we let $\ckcO'_b $ be the Young diagram formed by odd rows of $\ckcO$ but with half multiplicity.
Then
\[
\widecheck G(\lambda)=\SO_{2b+1}(\C)\times \SO_{2g}(\C)\quad\textrm{ and }\quad G(\lambda)=\Sp_{2b}(\C)\times \SO_{2g}(\C),
\]
where $b$ is the size of $\ckcO'_b$ and $2g$ is the size of $\ckcO_g$.
The integral Weyl group is of the form $W(\lambda) = \sfW_b \times \sfW'_g$.

By \cite[Section~5]{BVUni}, we have
\[
  \sigma(\overline X_{G(\lambda)}(\lambda))=
  \left(j_{W_\lambda}^{W(\lambda)} \sgn\right)\otimes \sgn
  =: \sigma_b\otimes \sigma_g \in \Irr(\sfW_b)\times  \Irr(\sfW'_g),
\]
where $W_\lambda$ is the stabilizer of $\lambda_L$ in $W$.

Let $\sfS_k$ ($k\geq 0$) be the symmetric group in $k$ letters. We have (by a  routine computation)
\begin{itemize}
\item $\sigma_b  = j_{\prod_{i} \sfS_{b_i}}^{\sfW_b} \sgn  = j_{\sfS_b}^{\sfW_b} (\ckcO'_b)^{\mathrm t}$; and
\item  $\sigma_g = \left( j_{\prod_{i} \sfS_{c_{2i}}}^{\sfW'_g}\sgn \right)\otimes \sgn_{\sfW'_g}$
\end{itemize}
where $b_i$ is the length of $i$-th row in $\ckcO'_b$ and
$c_{2i}$ is the length of $(2i)$-th column in $\ckcO_g$.
(Note that the columns of $\ckcO_g$ occur in pairs.)  In addition we have identified the Young diagram $(\ckcO'_b)^{\mathrm t}$ with the irreducible representation of $\sfS_b$ via the Springer correspondence for $\GL_b(\bC)$ (see \cite[11.4 and 13.3]{Carter}):
\[
  \Springer( (\ckcO'_b)^{\mathrm t}) = j_{\prod_{i}\sfS_{b_i}}^{\sfS_{b}} \sgn.
\]
By \Cref{redc}, $j$-operation by stage \cite[Proposition 11.2.4]{Carter} and \eqref{eq:ms.sgn},
\[
\begin{split}
\sigma(\barX_G(\lambda)) &  = j_{\sfW_b\times \sfW'_g}^{\sfW_n}\sigma_b\otimes \sigma_g \\
 &  = j_{\sfS_b\times \sfW'_g}^{\sfW_n}(\ckcO'_b)^{\mathrm{t}}\otimes \left(
 \left( j_{\prod_{i} \sfS_{c_{2i}}}^{\sfW'_g}\sgn \right)\otimes \sgn_{\sfW'_g}\right)\\
 &  = j_{\sfS_b\times \sfW_g}^{\sfW_n}(\ckcO'_b)^{\mathrm{t}}\otimes \left(
 \left( j_{\prod_{i} \sfS_{c_{2i}}}^{\sfW_g}\sgn \right)\otimes \sgn_{\sfW_g}\right)\\
\end{split}
\]
By \cite{BM}, there is a (unique) nilpotent orbit $\cO_g\in \overline{\Nil}(\s\p_{2g}(\bC))$ such that
\[
\Springer(\cO_g) = \left( j_{\prod_{i} \sfS_{c_{2i}}}^{\sfW_g}\sgn \right)\otimes \sgn_{\sfW_g}.
\]
By direct computation (see \cite[Section~7]{Sommers} for example),  we have
\[
    \cO_g = \tdBV(\ckcO_g).
\]
Since Springer correspondence and $j$-operation are compatible with parabolic induction of nilpotent orbits, we have
\[
    \orb(\barX_G(\lambda)) = \Ind_{\gl_b(\C)\times \fsp_{2g}(\bC)}^{\fsp_{2n}(\bC)} (\ckcO'_b)^{\mathrm{t}}\times \cO_g.
\]
By the definition of $\tdBV$, we also have
\[
\tdBV(\ckcO) = \Ind_{\gl_b(\C)\times \fsp_{2g}(\bC)}^{\fsp_{2n}(\bC)} (\ckcO'_b)^{\mathrm{t}} \times \tdBV(\ckcO_g).
\]
Combining the above two equations, we have
\[
    \orb(\barX_G(\lambda)) =
    \tdBV(\ckcO)
\]
and this proves the theorem.

\subsection{Metaplectic Barbasch-Vogan duality in terms of metaplectic double cell}

The above proof of Theorem \ref{thm16} also implies the following result, which should be compared with \cite{BVUni}*{Proposition~A2}.
\begin{prop}\label{cd22}
  The diagram
  \[
\begin{CD}
\overline{\mathrm{Nil}}(\s\p_{2n}(\C)) @>\tdBV>> \overline{\mathrm{Nil}}^{\mathrm{ms}}(\s\p_{2n}(\C)^*)\\
@V\mathrm{Springer}  VV                                     @VV\mathrm{Springer}   V\\
\Irr(\sfW_n) @>>>  \Irr^{\mathrm{ms}}(\sfW_n)\\
\end{CD}
\]
commutes,
where the bottom horizontal arrow is the map that sends $\sigma\in \Irr(\sfW_n)$ to the  unique metaplectic special representation of $\sfW_n$  in the metaplectic double cell containing $\sigma\otimes \sgn$.
\end{prop}

\begin{bibdiv}
  \begin{biblist}

\bib{Ach}{article}{
      author={Achar, P. N.},
       title={An order-reversing duality map for conjugacy classes in Lusztig's canonical quotient},
        date={2003},
     journal={Transform. Groups},
      volume={8},
      number={2},
       pages={107--145},
}

\bib{AB}{article}{
  author={Adams, J.},
  author={Barbasch, D.},
  title={Reductive dual pair correspondence for complex groups},
  journal={J. Funct. Anal.},
  volume={132},
  year={1995},
  pages={1-42},
  }

\bib{AB2}{article}{
  author={Adams, J.},
  author={Barbasch, D.},
  title={Genuine representations of the metaplectic group},
  journal={Compos. Math.},
  volume={113},
  year={1998},
  number={1},
  pages={23--66},
  }

    \bib{ABV}{book}{
      title={The Langlands classification and irreducible characters for real reductive groups},
      author={Adams, J.},
      author={Barbasch, D.},
      author={Vogan, D. A.},
      series={Progr. Math.},
      volume={104},
      year={1991},
      publisher={Birkhauser}
    }

    \bib{ArUni}{article}{
       author = {Arthur, J.},
       title = {Unipotent automorphic representations: conjectures},
       journal = {Orbites unipotentes et repr\'esentations, II, Ast\'erisque},
       pages = {13--71},
       volume = {171-172},
       year = {1989},
     }

\bib{ArEnd}{article}{
    author = {Arthur, J.},
    title = {The Endoscopic Classification of Representations: Orthogonal and Symplectic Groups},
    journal = {Amer. Math. Soc. Colloq. Publ., vol. 61, Amer. Math. Soc., Providence, RI},
    year = {2013},
    }

    \bib{B89}{article}{
      author = {Barbasch, D.},
      journal = {Invent. Math.},
      number = {1},
      pages = {103--176},
      title = {The unitary dual for complex classical Lie groups},
      url = {http://eudml.org/doc/143674},
      volume = {96},
      year = {1989},
    }

\bib{B.Sph}{article}{
 author = {Barbasch, D.},
  title = {The unitary spherical spectrum for split classical groups},
  journal = {J. Inst. Math. Jussieu},
  volume = {9},
  number = {2},
  pages = {265--356},
  year = {2010}
  }

\bib{B17}{article}{
  author = {Barbasch, D.},
  title = {Unipotent representations and the dual pair correspondence},
  journal = {In: Cogdell, J., Kim J.-L., Zhu, C.-B. (Eds.) Representation Theory, Number Theory, and Invariant Theory,
    In Honor of Roger Howe. Progr. Math., vol. 323, Birkhäuser/Springer},
  pages = {47--85},
  year = {2017},
}

\bib{BC}{article}{
  author = {Barbasch, D.},
  author = {Ciubotaru, D.},
  title = {Whittaker unitary dual of affine graded Hecke algebras of type E},
  journal ={Compos. Math.},
  volume ={145},
  year ={2009},
  number = {6},
  pages ={1563--1616},
}

\bib{BMSZ1}{article}{
  author = {Barbasch, D.},
  author = {Ma, J.-J.},
  author = {Sun, B.},
  author = {Zhu, C.-B.},
  title = {Special unipotent representations of real classical groups : counting and reduction},
  journal = {arXiv:2205.05266},
}

\bib{BMSZ2}{article}{
  author = {Barbasch, D.},
  author = {Ma, J.-J.},
  author = {Sun, B.},
  author = {Zhu, C.-B.},
  title = {Special unipotent representations of real classical groups : construction and unitarity},
  journal = {arXiv:1712.05552},
}

\bib{BVlocal}{article}{
   author = {Barbasch, D.},
   author = {Vogan, D. A.},
   title = {The local structure of characters},
   journal = {J. Funct. Anal.},
   volume = {37},
   number = {1},
   pages = {27--55},
   year = {1980}
 }

\bib{BVPri1}{article}{
   author = {Barbasch, D.},
   author = {Vogan, D. A.},
   title = {Primitive ideals and orbital integrals in complex classical groups},
   journal = {Math. Ann.},
   volume = {259},
   number = {2},
   pages = {153--199},
   year = {1982}
 }

\bib{BVPri2}{article}{
   author = {Barbasch, D.},
   author = {Vogan, D. A.},
   title = {Primitive ideals and orbital integrals in complex exceptional groups},
   journal = {J. Algebra},
   volume = {80},
   number = {2},
   pages = {350--382},
   year = {1983}
 }

\bib{BVUni}{article}{
 author = {Barbasch, D.},
 author = {Vogan, D. A.},
 journal = {Ann. of Math.},
 number = {1},
 pages = {41--110},
 title = {Unipotent representations of complex semisimple groups},
 volume = {121},
 year = {1985}
}

\bib{Bor}{article}{
 author = {Borho, W.},
 journal = {S\'eminaire Bourbaki, Exp. No. 489},
 pages = {1--18},
 title = {Recent advances in enveloping algebras of semisimple Lie-algebras},
 year = {1976/77}
}

\bib{BB}{article}{
 author = {Borho, W.},
 author = {Brylinski, J.-L.},
 journal = {Invent. Math.},
 pages = {1--68},
 title = {Differential operators on homogeneous spaces III},
 year = {1985},
 volume = {80},
}

\bib{BK}{article}{
author={Borho, W.},
author={Kraft, H.},
title={\"{U}ber die Gelfand-Kirillov-Dimension},
journal={Math. Ann.},
volume={220},
date={1976},
number={1},
pages={1--24},
}

\bib{BM}{article}{
   author={Borho, W.},
   author={MacPherson, R.},
   title={Repr\'{e}sentations des groupes de Weyl et homologie d'intersection
   pour les vari\'{e}t\'{e}s nilpotentes},
   journal={C. R. Math. Aca. Sci. Paris},
   volume={292},
   date={1981},
   number={15},
   pages={707--710},
   issn={0249-6291},
}

\bib{Carter}{book}{
   author={Carter, R. W.},
   title={Finite groups of Lie type},
   series={Wiley Classics Library},
   publisher={John Wiley \& Sons, Ltd., Chichester},
   date={1993},
   pages={xii+544},
   isbn={0-471-94109-3},
}

\bib{Ca89}{article}{
 author = {Casselman, W.},
 journal = {Canad. J. Math.},
 pages = {385--438},
 title = {Canonical extensions of Harish-Chandra modules to representations of $G$},
 volume = {41},
 year = {1989}
}

\bib{CM}{book}{
  title = {Nilpotent orbits in semisimple Lie algebra: an introduction},
  author = {Collingwood, D. H.},
  author = {McGovern, W. M.},
  year = {1993},
  publisher = {Van Nostrand Reinhold Co.},
}

\bib{DKPC}{article}{
title = {Nilpotent orbits and complex dual pairs},
journal = {J. Algebra},
volume = {190},
number = {2},
pages = {518--539},
year = {1997},
author = {Daszkiewicz, A.},
author = {Kra\'skiewicz, W.},
author = {Przebinda, T.},
}

\bib{Dix}{book}{
  title={Enveloping algebras},
  author={Dixmier, J.},
  year={1996},
  publisher={Grad. Stud. Math., vol. 11, Amer. Math. Soc.},
}

\bib{GGW}{article}{
 author={Gan, W. T.},
 author={Gao, F.},
 author={Weissman, M. H.},
 title={L-groups and the Langlands program for covering groups: a historical introduction},
 journal={Astérisque},
 volume={398},
 pages={1--31},
 year={2018},
 }

\bib{GS}{article}{
 author={Gan, W. T.},
 author={Savin, G.},
 title={Representations of metaplectic groups I: epsilon dichotomy and local Langlands correspondence},
 journal={Compos. Math.},
 volume={148},
 number={6},
 pages={1655--1694},
 year={2012},
 }

\bib{Howe79}{article}{
  title={$\theta-$series and invariant theory},
  author={Howe, R.},
  Journal={In: Borel, A., Casselman, W. (Eds.) Automorphic Forms, Representations and $L-$functions. Proc. Sympos. Pure Math., vol. 33, Amer. Math. Soc.},
    year={1979},
  pages={275--285},
}

\bib{Howe89}{article}{
  author={Howe, R.},
  title={Transcending classical invariant theory},
  journal={J. Amer. Math. Soc.},
  volume={2},
  pages={535--552},
  year={1989},
}

\bib{JLS}{article}{
  author={Jiang, D.},
  author={Liu, B.},
  author={Savin, G.},
  title={Raising nilpotent orbits in wave-front sets},
  journal={Represent. Theory},
  volume={20},
  pages={419--450},
  year={2016},
}

\bib{J80}{article}{
   author={Joseph, A.},
   title={Goldie rank in the enveloping algebra of a semisimple Lie algebra II},
   journal={J. Algebra},
   volume={65},
   date={1980},
   number={2},
   pages={284--306},
}

\bib{J85}{article}{
 author={Joseph, A.},
 title={On the associated variety of a primitive ideal},
 journal={J. Algebra},
 volume={93},
 number={2},
 pages={509--523},
 year={1985},
 }

\bib{KL}{article}{
author={Kazhdan, D.},
author={Lusztig, G.},
title={Representations of Coxeter groups and Hecke algebras},
journal={Invent. Math.},
volume={53},
number={3},
pages={165-184},
year={1979},
}

\bib{King}{article}{
author={King, D.},
title={The character polynomial of the annihilator of an irreducible
  Harish-Chandra module},
journal={Amer. J. Math.},
volume={103},
number={6},
pages={1195-1240},
year={1981},
}

\bib{Ki62}{article}{
author={Kirillov, A. A.},
title={Unitary representations of nilpotent Lie groups},
journal={Uspekhi Mat. Nauk},
volume={17},
number={4},
pages={57--110},
year={1962},
}

\bib{Ko70}{article}{
author={Kostant, B.},
title={Quantization and unitary representations},
bookTitle ={Lectures in Modern Analysis and Applications III, Lecture Notes in Math., vol. 170},
pages={87--208},
year={1970},
}

\bib{Li89}{article}{
author={Li, J.-S.},
title={Singular unitary representations of classical groups},
journal={Invent. Math.},
volume={97},
number={2},
pages={237--255},
year={1989},
}

\bib{LMBM}{article}{
author={Losev, I.},
author={Mason-Brown, L.},
author={Matvieievskyi,D.},
title={Unipotent ideals and Harish-Chandra bimodules},
journal = {arXiv:2108.03453},
}

\bib{LM}{article}{
   author = {Loke, H. Y.},
   author = {Ma, J.-J.},
    title = {Invariants and $K-$spectrums of local theta lifts},
    journal = {Compos. Math.},
    volume = {151},
    number= {1},
    year = {2015},
    pages ={179--206},
}

\bib{L79}{article}{
author={Lusztig, G.},
title={A class of irreducible representations of a Weyl group},
journal={Nederl. Akad. Wetensch. Indag. Math.},
volume={41},
number={3},
pages={323--335},
year={1979},
}

\bib{L82}{article}{
author={Lusztig, G.},
title={A class of irreducible representations of a Weyl group. II},
journal={Nederl. Akad. Wetensch. Indag. Math.},
volume={44},
number={2},
pages={219--226},
year={1982},
}

\bib{Lu.CF}{book}{
   author={Lusztig, G.},
   title={Characters of reductive groups over a finite field},
   series={Ann. of Math. Stud.},
   volume={107},
   publisher={Princeton University Press, Princeton, NJ},
   date={1984},
   pages={xxi+384},
   isbn={0-691-08350-9},
   isbn={0-691-08351-7},
   doi={10.1515/9781400881772},
}

\bib{LS}{article}{
   author = {Lusztig, G.},
   author = {Spaltenstein, N.},
    title = {Induced unipotent classes},
    journal = {J. Lond. Math. Soc.},
    volume = {19},
    year = {1979},
    pages ={41--52},
}

\bib{LY}{article}{
   author = {Lusztig, G.},
   author = {Yun, Z.},
    title = {Endoscopy for Hecke categories, character sheaves and representations},
    journal = {Forum Math. Pi},
    volume = {8},
    year = {2020},
    pages ={e12, 93 pp},
}

\bib{Mo96}{article}{
 author={M{\oe}glin, C.},
    title = {Front d'onde des repr\'esentations des groupes classiques $p-$adiques},
    journal = {Amer. J. Math.},
    volume = {118},
    number = {6},
    year = {1996},
    pages ={1313--1346},
}

\bib{MoUnip}{article}{
 author={M{\oe}glin, C.},
    title = {Repr\'esentations quadratiques unipotentes des groupes classiques $p-$adiques},
    journal = {Duke Math. J.},
    volume = {84},
    number = {2},
    year = {1996},
    pages ={267--332},
}

\bib{MR}{article}{
 author={M{\oe}glin, C.},
 author={Renard, D.},
    title = {Paquets d'Arthur des groupes classiques complexes},
    journal = {Around Langlands correspondences, Contemp. Math., vol. 691, Amer. Math. Soc.},
    year = {2017},
    pages ={203--256},
}

\bib{PrPr}{article}{
      author={Protsak, V.},
      author={Przebinda, T.},
       title={On the occurrence of admissible representations in the real Howe correspondence in stable range},
        date={2008},
     journal={Manuscripta Math.},
      volume={126},
       pages={135--141},
}

\bib{PrzInf}{article}{
      author={Przebinda, T.},
       title={The duality correspondence of infinitesimal characters},
        date={1996},
     journal={Colloq. Math.},
      volume={70},
       pages={93--102},
}

\bib{RT1}{article}{
  author = {Renard, D.},
  author = {Trapa, P.},
  year = {2000},
  pages = {245--295},
  title = {Irreducible genuine characters of the metaplectic group: Kazhdan-Lusztig algorithm and Vogan duality},
  volume = {4},
  journal = {Represent. Theory},
}

\bib{Sha}{article}{
      author={Shall, D.},
       title={Linear symmetries of free boson fields},
        date={1962},
     journal={Trans. Amer. Math. Soc.},
      volume={103},
       pages={149--167},
}

\bib{Sommers}{article}{
      author={Sommers, E.},
       title={Lusztig's canonical quotient and generalized duality},
        date={2001},
     journal={J. Algebra},
      volume={243},
      number={2},
       pages={790--812},
}

\bib{Spa}{book}{
      author={Spaltenstein, N.},
       title={Classes unipotentes et sous-groupes de Borel},
        date={1982},
     publisher={Lecture Notes in Math., vol. 946, Springer-Verlag, Berlin-New York},
}

\bib{Spr}{article}{
author = {Springer, T. A.},
title = {A construction of representations of Weyl groups},
journal = {Invent. Math.},
volume = {44},
pages = {279--293},
year = {1978},
}

\bib{Vg}{book}{
   author={Vogan, D. A.},
   title={Representations of real reductive Lie groups},
   series={Progr. Math.},
   volume={15},
   publisher={Birkh\"{a}user, Boston, Mass.},
   date={1981},
   pages={xvii+754},
   isbn={3-7643-3037-6},
}

\bib{VoIC4}{article}{
   author={Vogan, D. A.},
   title={Irreducible characters of semisimple Lie groups. IV. Character-multiplicity duality},
         date={1982},
      journal={Duke Math. J.},
       volume={49},
       number={4},
        pages={943--1073},
 }

\bib{VoIC3}{article}{
   author={Vogan, D. A.},
   title={Irreducible characters of semisimple Lie groups. III. Proof of Kazhdan-Lusztig conjecture in the integral case},
         date={1983},
      journal={Invent. Math.},
       volume={71},
       number={2},
        pages={381-417},
 }

\bib{VoBook}{book}{
author = {Vogan, D. A.},
  title={Unitary representations of reductive Lie groups},
  year={1987},
  series = {Ann. of Math. Stud.},
 volume={118},
  publisher={Princeton University Press}
}

 \bib{Vo89}{article}{
   author = {Vogan, D. A.},
   title = {Associated varieties and unipotent representations},
  journal={In: Barker, W., Sally, P. (Eds.) Harmonic Analysis on Reductive Groups (Bowdoin College, 1989). Progr. Math., vol. 101, Birkh\"{a}user, Boston-Basel-Berlin},
   year = {1991},
 pages={315--388},
 }

\bib{Wa2}{book}{
  title={Real reductive groups II},
  author={Wallach, N. R.},
  year={1992},
  publisher={Academic Press Inc. }
}

\bib{Weil}{article}{
  title={Sur certain group d'operateurs unitaires},
  author={Weil, A.},
  year = {1964},
  journal={Acta Math.},
  volume = {111},
  pages= {143--211}
}

\bib{Weis}{article}{
 author={Weissman, M. H.},
 title={L-groups and parameters for covering groups},
 journal={Astérisque},
 volume={398},
 pages={33--186},
 year={2018},
 }

\end{biblist}
\end{bibdiv}

\end{document}